\numberwithin{equation}{section}
\title{Unique determination of cost functions in a multipopulation mean field game model}
\author{
Kui Ren\thanks{Department of Applied Physics and Applied Mathematics, Columbia University, New York, NY 10027; kr2002@columbia.edu}
\and Nathan Soedjak\thanks{Department of Applied Physics and Applied Mathematics, Columbia University, New York, NY 10027; ns3572@columbia.edu}
\and
Kewei Wang\thanks{School of Mathematical Sciences, Peking University, Beijing 100871, China; wangkw@stu.pku.edu.cn}
}
\begin{document}

\maketitle



\begin{abstract}

This paper studies an inverse problem for a multipopulation mean field game (MFG) system where the objective is to reconstruct the running and terminal cost functions of the system that couples the dynamics of different populations. We derive uniqueness results for the inverse problem with different types of available data. In particular, we show that it is possible to uniquely reconstruct some simplified forms of the cost functions from data measured only on a single population component under mild additional assumptions on the coupling mechanism. The proofs are based on the standard multilinearization technique that allows us to reduce the inverse problems into simplified forms.

\end{abstract}


\begin{keywords}
multipopulation mean field games, inverse problems, uniqueness, multilinearization
\end{keywords}


\begin{AMS}
35Q89, 35R30, 91A16.
\end{AMS}

\section{Introduction}
\label{SEC:Intro}

This paper studies inverse problems to a multipopulation mean field game (MFG) model. Let $x\in\bbR^d$ denote the state variable, $t\in[0,\infty)$ denote the time variable, and $\bbT^d:=\bbR^d/\bbZ^d$ be the standard $d$-dimensional torus. The $n$-population mean field game model we are interested in takes the form, 
\begin{equation}\label{EQ:MFG MultiPop}
    \begin{array}{>{\displaystyle}lc}
        -\partial_t u_i(x,t)-\Delta u_i(x,t)+\frac{1}{2}|\nabla u_i(x,t)|^2=F_i(x,\bm(x,t)), &(x,t)\in\bbT^d\times(0,T)\\[1ex]
        \partial_t m_i(x,t)-\Delta m_i(x,t)-{\rm div}(m_i(x,t)\nabla u_i)=0, &(x,t)\in\bbT^d\times(0,T)\\[1ex]
        u_i(x,T)=G_i(x,\bm(x,T)),\ m_i(x,0)=m_{i,0}(x), &x\in \bbT^d
    \end{array}
\end{equation}
$i=1,2,\dots,n$, where $(u_i,m_i)$ is the value function-density pair of the $i$-th population, and $\Delta$, $\rm div$ are respectively the Laplacian and divergence operators with respect to $x$-variable. For simplicity we denote $\bu=(u_1,\dots,u_n)$, $\bm=(m_1,\dots,m_n)$. 
The functions $F_i$ and $G_i$ are, respectively, the running cost and terminal cost of the $i$-th population. The initial density of the $i$-th population is denoted by $m_{i,0}$.
 
Each component of the multipopulation MFG system \eqref{EQ:MFG MultiPop} consists of two coupled parabolic equations, which can be understood as nonlinear optimal transport equations. Players' value function $u_i$ satisfies the Hamilton-Jacobi-Bellman (HJB) equation, which is the first backward parabolic equation in \eqref{EQ:MFG MultiPop}, that describes the evolution of the cost of the players due to their choice of strategies. The density of each population $m_i$ satisfies the standard Fokker-Planck-Komogorov (FPK) equation, which is the second forward equation in \eqref{EQ:MFG MultiPop}, that describes the dynamics of population densities. These two coupling equations fully characterize the dynamics of the population. The coupling between different populations is mainly through the running cost functions $F_i$ and the terminal cost functions $G_i$ as they depend on the densities of all populations.

Mean field game models were originally introduced as the limit of a large class of multi-player games as the number of players increases toward infinity; see~\cite{HuCaMa-IEEE03,HuCaMa-JSSC07,HuCaMa-IEEE07,CaHuMa-CIS06,LaLi-CRM06I,LaLi-CRM06II,LaLi-JJM07} for some of the earliest works, and~\cite{AcLa-MFG20,Caines-ESC21,GoPiVo-Book16,GoSa-DGA14,BeFrYa-Book13,CaPo-MFG20,Ryzhik-Notes18,UlSwGo-PR19} and references therein for overviews of recent progress in the field. The Nash equilibrium of such games can be characterized as solutions of the MFG system. In a standard single-population MFG model, players are assumed to be all the same. However, in real-world modeling, one often needs to take into account interactions between players at different scales. This motivated the generalization to multipopulation MFG models such as ~\eqref{EQ:MFG MultiPop}; see~\cite{ArMa-SIAM22,BaCi-PDEMMAP18,BaFe-NoDEA16,Feleqi-DGA13,Gueant-AMO15,Wikecek-DGA24} and references therein.


In this work, we are interested in reconstructing the cost functions 
\[
    \bF:=(F_1, \dots, F_n),\quad\mbox{and},\quad \bG:=(G_1, \dots, G_n)\,.
\]
of the MFG system~\eqref{EQ:MFG MultiPop} from observed data on the value function $\bu(x,t)$. We consider two different types of available data. 
\medskip 

\noindent{\bf I. Multipopulation Data.} In the first case, we assume that we can measure  $\bu(x,0)$ for all possible initial conditions $\bm_0:=(m_{1,0}, \dots, m_{n,0})$, that is, we have data from the map:
\begin{equation}\label{EQ:Data 1}
    \cM_{(\bF,\bG)}: \bm_0 \mapsto \bu(x,t)|_{t=0}\,.
\end{equation}
Let $\cM_{(\bF^1, \bG^1)}$ and $\cM_{(\bF^2, \bG^2)}$ be the data corresponding to the cost function pairs $(\bF^1, \bG^1)$ and $(\bF^2, \bG^2)$, respectively. We will establish the following uniqueness result:
\begin{equation}\label{EQ:Unique Full}
    \cM_{(\bF^1,\bG^1)}=\cM_{(\bF^2,\bG^2)}\quad  \text{implies}\quad (\bF^1,\bG^1)=(\bF^2,\bG^2),
\end{equation}
under appropriate assumptions that we will specify later.

\medskip 

\noindent{\bf II. Single-population Data.} In the second scenario, we assume that we can measure the value function at $t=0$ for only one given population, say, the $i$-th population. That is, we have data given by the map:
\begin{equation}\label{EQ:Data 2}
    \wt\cM_{(\bF,\bG)}: \bm_0\mapsto u_i(x,t)|_{t=0}, \qquad \mbox{for a given}\ i\in\cI:=\{1, \dots, n\}\,.
\end{equation}
Therefore, we have only partial information on $\bu(x,0)$. With such data, it is not expected that we can reconstruct the cost functions in its fully general form. We will, therefore, have to put additional constraints on the form of the cost functions to be reconstructed. 
We will consider two special classes of $(\bF, \bG)$. In the first class, all populations share the same cost functions, that is,
\begin{equation}\label{EQ:Class 1}
    \cC_1:=\left\{(\bF, \bG)\mid \bF=\big(F(x,\bm), \dots, F(x,\bm)\big), \ \bG=\big(G(x,\bm), \dots, G(x,\bm)\big)\right\}\,.
\end{equation}
In the second class, the cost functions are state-independent, that is
\begin{equation}\label{EQ:Class 2}
    \cC_2:=\left\{(\bF, \bG)\mid \bF=\big(F_1(\bm), \dots, F_n(\bm)\big), \ \bG=\big(G_1(\bm), \dots, G_n(\bm)\big)\right\}\,.
\end{equation}
For either class of the cost function pairs, we will show that one can uniquely reconstruct $(\bF, \bG)$ from data encoded in $\wt \cM_{(\bF,\bG)}$, that is, for $\cC=\cC_1$ or $\cC=\cC_2$,
\begin{equation}\label{EQ:Unique Single}
    \forall (\bF^1,\bG^1),(\bF^2,\bG^2)  \in \cC, \quad \wt\cM_{(\bF^1,\bG^1)}=\wt\cM_{(\bF^2,\bG^2)},\quad \text{implies}\quad (\bF^1,\bG^1)=(\bF^2,\bG^2)\,.
\end{equation}
While this result seems less surprising for cost function pairs in class $\cC_1$, it is quite nontrivial for cost function pairs in class $\cC_2$ as such cost functions are population-dependent while our data are only measured on a single population.

Inverse problems to mean field game models have attracted significant attention recently. On the computational side, numerical inversion algorithms have been developed for the reconstruction of different parameters in MFG models in applications~\cite{AgLeFuNu-JCP22, ChFuLiNuOs-IP23,DiLiOsYi-JSC22,FuLiOsLi-JCP23,KlLiYa-AMO24,LiJaLiNuOs-SIAM21,LeLiTeLiOs-SIAM21,ReSoTo-arXiv24,YuLaLiOs-JCP23,YuXiChLa-IP24}. Extensive numerical experiments have been performed to demonstrate the effectiveness of the methods as well as to gain insights on the mathematical properties of the inverse problems. On the theoretical side,  uniqueness and stability results have been developed for both linearized and fully nonlinear inverse problems with different data types; see, for instance,~\cite{ImLiYa-AML23,ImLiYa-IPI24,ImYa-MMAS23,KlLiLi-JIIP24,LiZh-arXiv23,LiMoZh-IP23,ReSoWaZh-IP24} and references therein. 

To the best of our knowledge, all the existing theoretical results on inverse problems for mean field game systems are done for single-population systems. This is the main motivation for the current work. Building on the classical multilinearization technique for handling nonlinear models, which has been successfully applied to mean field game inverse problems~\cite{DiLiZh-SIAM25,LiZh-arXiv23,LiMoZh-IP23} as well as many other inverse problems for nonlinear partial differential equations (see, for instance,~\cite{AsZh-JDE21,Isakov-ARMA93,LaLiLiSa-JMPA21,KrUh-PAMS20,HaLi-NA23,Kian-Nonlinearity23,ReSo-IP24} and references therein for some random sample publications), we derive uniqueness theories of the form~\eqref{EQ:Unique Full} and~\eqref{EQ:Unique Single} for the multipopulation MFG model~\eqref{EQ:MFG MultiPop}. 


The rest of the paper is organized as follows. We first introduce the notations used in this paper in~\Cref{SEC:Prelim}. In~\Cref{SEC:Forward}, we recall the local well-posedness results of the mean field game model~\eqref{EQ:MFG MultiPop} and introduce the multilinearization of the model. The main uniqueness results of the paper are presented in~\Cref{SEC:Recon Full,SEC:Recon Partial}. While the results in~\Cref{SEC:Recon Full} can be seen as generalizations of the uniqueness results of single-population models, those in~\Cref{SEC:Recon Partial} show some unique features of the multipopulation case. Concluding remarks and discussions are then offered in~\Cref{SEC:Concluding}.
 
\section{Setup of notations}
\label{SEC:Prelim}

Let us first fix some of the standard notations we will use throughout the paper. We follow the conventions in~\cite{LiMoZh-IP23} as we will cite a well-posedness result from that work.
 
We consider MFGs on the $d$-dimensional torus $\bbT^d$. Notice that any function defined on $\bbT^d$ should be $(1,1,\dots,1)$-periodic with respect to $x$. In other words, it should be 1-periodic with respect to $x_i$, $i=1,2,\dots,d$.

We use the notation $\bbN$ to denote the set of all nonnegative integers. Given a $d$-dimensional multi-index $\balpha=(\alpha_1,\dots,\alpha_d)\in\bbN^d$ and $x=(x_1,\dots,x_d)\in\bbR^d$, we take the following standard notations of multi-index:
\[
    x^{\balpha}=x_1^{\alpha_1}x_2^{\alpha_2}\cdots x_d^{\alpha_d},\ D^{\balpha}=\partial_{x_1}^{\alpha_1}\partial_{x_2}^{\alpha_2}\cdots \partial_{x_d}^{\alpha_d},\ \balpha!=\alpha_1!\alpha_2!\cdots \alpha_d!,\ |\balpha|=\sum_{i=1}^d \alpha_i.
\]

For $k\in\bbN$ and $\alpha\in(0,1)$, the H\"older space $C^{k+\alpha}(\bbT^d)$ is defined as the following subspace of $C^{k}(\bbT^d)$: $u\in C^{k+\alpha}(\bbT^d)$ if and only if $D^{\balpha}u$ exists and is $\alpha$-H\"older continuous for all $|\balpha|\le k$. The norm of $C^{k+\alpha}(\bbT^d)$ is defined as
\begin{equation} 
    \|u\|_{C^{k+\alpha}(\bbT^d)}:=\sum_{|\balpha|\le k}\|D^{\balpha}u\|_{L^\infty(\bbT^d)}+\sum_{|\balpha|=k}\sup_{x\neq y}\frac{|D^{\balpha}u(x)-D^{\balpha}u(y)|}{|x-y|^\alpha}.
\end{equation}

We denote by $Q:=\bbT^d\times[0,T]$ the time-space domain. The H\"older space $C^{k+\alpha,\frac{k+\alpha}{2}}(Q)$ is defined similarly to $C^{k+\alpha}(\bbT^d)$. We say $u\in C^{k+\alpha,\frac{k+\alpha}{2}}(Q)$ if and only if $D^{\balpha}\partial_t^j u$ exists and is $\alpha$-H\"older continuous in $x$ and $\frac{\alpha}{2}$-H\"older continuous in $t$ for all $\balpha\in\bbN^d$, $j\in\bbN$ with $|\balpha|+2j\le k$. The norm of $C^{k+\alpha,\frac{k+\alpha}{2}}(Q)$ is defined as
\begin{equation} 
    \|u\|_{C^{k+\alpha,\frac{k+\alpha}{2}}(Q)}:=\sum_{|\balpha|+2j\le k}\|D^{\balpha}\partial_t^j u\|_{L^\infty(Q)}+\sum_{|\balpha|+2j=k}\sup_{(x,t)\neq(y,s)}\frac{|D^{\balpha}\partial_t^j u(x,t)-D^{\balpha}\partial_t^j u(y,s)|}{|x-y|^{\alpha}+|t-s|^{\frac{\alpha}{2}}}.
\end{equation}

Since we need to treat vector-valued functions in multipopulation MFG system~\eqref{EQ:MFG MultiPop}, we give the definition of corresponding norms here. The H\"older norm of a vector-valued function $\bu=(u_1,u_2,\dots,u_n)$ is defined as
\begin{equation}\label{EQ:multi-norm}
    \begin{aligned}
        \|\bu\|_{C^{k+\alpha}(\bbT^d)}&:=\sum_{i=1}^n \|u_i\|_{C^{k+\alpha}(\bbT^d)},\\
        \|\bu\|_{C^{k+\alpha,\frac{k+\alpha}{2}}(Q)}&:=\sum_{i=1}^n \|u_i\|_{C^{k+\alpha,\frac{k+\alpha}{2}}(Q)}.
    \end{aligned}
\end{equation}
 
Following~\cite{LiMoZh-IP23}, we assume that the cost functions $F_i$ and $G_i$ ($1\le i\le n$) admit a generic power series representation of the form, $\bbeta$ being an $n$-dimensional multi-index,
\begin{equation}\label{EQ:power expansion}        F_i(x,\bz)=\sum_{\bbeta\in\bbN^n,|\bbeta|\ge 1} F_i^{(\bbeta)}(x)\frac{\bz^\bbeta}{\bbeta!}\,.
\end{equation}
Clearly, functions satisfying the conditions in the following definition admit this power series representation.
\begin{definition}
    We say a function $U(x,\bz):\bbT^d\times\bbC^n\to\bbC$ is admissible or in class $\cA$, that is, $U\in\cA$, if it satisfies the following conditions:\\[1ex]
    (i) The map $\bz\mapsto U(\cdot,\bz)$ is holomorphic with value in $C^\alpha(\bbT^d)$ for some $\alpha\in(0,1)$;\\[1ex]
    (ii) $U(x,\bzero)=0$ for all $x\in\bbT^d$.
\end{definition}
We use the standard notation
\begin{equation}\label{EQ:A}
    \bF=(F_1, \cdots, F_n) \in\cA^n \iff F_i\in\cA, \forall i\ge 1\,.
\end{equation}
Throughout the rest of the paper, we assume the cost functions $\bF\in\cA^n$ and $\bG\in\cA^n$.

\section{The forward problem and its linearization}
\label{SEC:Forward}

We now recall the necessary results on the MFG system~\eqref{EQ:MFG MultiPop} for our analysis in the next sections. We first review a local well-posedness result that provides the theoretical foundation needed for the multilinearization of the MFG system that our uniqueness proofs are based on.

\subsection{Well-posedness of the forward problem}
\label{SUBSEC:Wellposedness}

We first state the following well-posedness result for the linearized MFG system as a preliminary.
\begin{lemma}\label{LEMMA:well-posedness}
    Suppose $f,\tilde{f}\in C^{\alpha,\frac{\alpha}{2}}(Q)$, $F_k\in C^{\alpha}(\bbT^d)$, and $g,\tilde{g},G_k\in C^{2+\alpha}(\bbT^d)$ for some $\alpha\in(0,1)$, then the system
    \begin{equation}\label{EQ:linearized system}
        \left\{\begin{array}{>{\displaystyle}lc}
            -\partial_t u_i(x,t)-\Delta u_i(x,t)=\sum_{k=1}^n F_k(x)m_k(x,t)+f(x,t), &(x,t)\in\bbT^d\times(0,T),\\
            \partial_t m_i(x,t)-\Delta m_i(x,t)=\tilde{f}(x,t), &(x,t)\in\bbT^d\times(0,T),\\
            u_i(x,T)=\sum_{k=1}^n G_k(x)m_k(x,T)+g(x),\ m(x,0)=\tilde{g}(x), &x\in\bbT^d.
        \end{array}\right.
    \end{equation}
    admits a unique solution $(\bu,\bm)\in [C^{2+\alpha,1+\frac{\alpha}{2}}(Q)]^{2n}$.
\end{lemma}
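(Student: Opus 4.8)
The plan is to exploit the fact that, notwithstanding its coupled appearance, system~\eqref{EQ:linearized system} has a \emph{triangular} structure: the Fokker--Planck components decouple entirely from the Hamilton--Jacobi components and from one another, so I would solve the problem in two successive stages, each reduced to the scalar linear heat equation on $\bbT^d$.

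\textbf{Stage 1: the density equations.} For each fixed $i$, the second equation together with $m_i(\cdot,0)=\tilde g$ is a closed forward heat equation $\partial_t m_i-\Delta m_i=\tilde f$ on $Q$, involving neither $\bu$ nor the other $m_j$'s. Since $\tilde f\in C^{\alpha,\frac{\alpha}{2}}(Q)$ and $\tilde g\in C^{2+\alpha}(\bbT^d)$, I would invoke the classical parabolic Schauder theory (in the periodic setting on the torus) to obtain a unique solution $m_i\in C^{2+\alpha,1+\frac{\alpha}{2}}(Q)$, together with the corresponding norm estimate. In fact all the $m_i$ coincide, but this is not needed.

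\textbf{Stage 2: the value-function equations.} Substituting the $m_k$ from Stage~1 into the first equation and the terminal condition, I would first check the regularity of the resulting data: since multiplication maps $C^\alpha(\bbT^d)\times C^{\alpha,\frac{\alpha}{2}}(Q)$ into $C^{\alpha,\frac{\alpha}{2}}(Q)$ and $C^{2+\alpha}(\bbT^d)$ is a Banach algebra, the source $h_i:=\sum_{k=1}^n F_k m_k+f$ belongs to $C^{\alpha,\frac{\alpha}{2}}(Q)$ and the terminal datum $\phi_i:=\sum_{k=1}^n G_k m_k(\cdot,T)+g$ belongs to $C^{2+\alpha}(\bbT^d)$. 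Reversing time via $s=T-t$ converts the backward problem $-\partial_t u_i-\Delta u_i=h_i$, $u_i(\cdot,T)=\phi_i$ into a standard forward heat equation with source in $C^{\alpha,\frac{\alpha}{2}}$ and initial data in $C^{2+\alpha}$, and parabolic Schauder theory again yields a unique $u_i\in C^{2+\alpha,1+\frac{\alpha}{2}}(Q)$. Assembling the $2n$ scalar solutions gives the unique $(\bu,\bm)\in[C^{2+\alpha,1+\frac{\alpha}{2}}(Q)]^{2n}$, with uniqueness inherited from uniqueness at each scalar step.

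\textbf{Where the work is.} I do not expect a genuine analytic obstacle here: once the triangular structure is recognized the proof is essentially a bookkeeping exercise built on the scalar heat equation. The two points that require attention are (i) making sure the Schauder estimates are quoted in the periodic (torus) setting rather than for a boundary-value problem, and (ii) verifying that the assumed regularity of the coefficients ($F_k\in C^\alpha$, $G_k\in C^{2+\alpha}$) is precisely what keeps $h_i$ in $C^{\alpha,\frac{\alpha}{2}}(Q)$ and $\phi_i$ in $C^{2+\alpha}(\bbT^d)$ after multiplication by the densities — which is exactly why those Hölder exponents appear in the hypotheses.
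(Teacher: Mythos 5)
Your proposal is correct and is essentially the paper's own argument: the paper also observes that the system is triangular, solves the heat equations for the $m_k$ first, and then substitutes into the backward equation for the $u_i$, appealing to standard linear parabolic (Schauder) theory at each step. Your additional care about product regularity and time reversal only fills in details the paper explicitly omits.
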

This result can be seen as a corollary of the standard well-posedness result for linear parabolic equations, since we can first obtain $\bm\in[C^{2+\alpha,1+\frac{\alpha}{2}}(Q)]^{n}$ from the second equation of \eqref{EQ:linearized system}, then put these $m_k$ into the first equation to obtain $\bu\in[C^{2+\alpha,1+\frac{\alpha}{2}}(Q)]^{n}$. Therefore, the detailed proof is omitted here.

Based on the previous Lemma, one can prove the following theorem, which is vital for the linearization process in the next subsection.
\begin{theorem}\label{THM:holomorphic}
    For any given $\bF\in \cA^n$ and $\bG\in\cA^n$, the following results hold:

    (a) There exist constants $\delta,C>0$, such that for any
    \begin{equation}\nonumber
        \bm_0\in B_\delta\left([C^{2+\alpha}(\bbT^d)]^n\right):=\{\bm_0\in[C^{2+\alpha}(\bbT^d)]^n:\|\bm_0\|_{C^{2+\alpha}(\bbT^d)}\le\delta\},
    \end{equation}
    the multipopulation MFG system \eqref{EQ:MFG MultiPop} admits a solution $(\bu,\bm)$ which satisfies $\bu(x,t),\bm(x,t)\in[C^{2+\alpha,1+\frac{\alpha}{2}}(Q)]^n$, and
    \begin{equation}\nonumber
        \|(\bu,\bm)\|_{C^{2+\alpha,1+\frac{\alpha}{2}}(Q)}\le C\|\bm_0\|_{C^{2+\alpha}(\bbT^d)}.
    \end{equation}
    Furthermore, the solution $(\bu,\bm)$ is unique within the class
    \begin{equation}\nonumber
        \{(\bu,\bm)\in [C^{2+\alpha,1+\frac{\alpha}{2}}(Q)]^{2n}:\|(\bu,\bm)\|_{C^{2+\alpha,1+\frac{\alpha}{2}}(Q)}\le C\delta\}.
    \end{equation}

    (b) Consider the solution map
    \begin{equation}\nonumber
        S:B_\delta\left([C^{2+\alpha}(\bbT^d)]^n\right)\to [C^{2+\alpha,1+\frac{\alpha}{2}}(Q)]^{2n}
    \end{equation}
    defined by $S(\bm_0)=(\bu,\bm)$ where $(\bu,\bm)$ is the unique solution to system \eqref{EQ:MFG MultiPop} in (a). Then $S$ is holomorphic.
\end{theorem}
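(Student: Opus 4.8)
The plan is to set up the problem as a fixed-point/implicit-function statement on an appropriate Banach space of quadruples and apply the holomorphic implicit function theorem. First I would introduce the solution space $\cX:=[C^{2+\alpha,1+\frac{\alpha}{2}}(Q)]^{2n}$ and the data space $\cY:=[C^{2+\alpha}(\bbT^d)]^n$, and define a nonlinear map $\Phi:\cY\times\cX\to\cZ$ encoding the residual of the MFG system \eqref{EQ:MFG MultiPop}, where $\cZ$ collects the PDE residuals and the initial/terminal traces in the matching H\"older classes, i.e. $\cZ:=[C^{\alpha,\frac{\alpha}{2}}(Q)]^{2n}\times[C^{2+\alpha}(\bbT^d)]^{2n}$. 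Concretely, for a quadruple $w=(\bu,\bm)$,
\begin{equation}\nonumber
    \Phi(\bm_0,w):=\Bigl(-\partial_t u_i-\Delta u_i+\tfrac12|\nabla u_i|^2-F_i(x,\bm),\ \partial_t m_i-\Delta m_i-\mathrm{div}(m_i\nabla u_i),\ u_i(\cdot,T)-G_i(\cdot,\bm(\cdot,T)),\ m_i(\cdot,0)-m_{i,0}\Bigr)_{i=1}^n.
\end{equation}
One checks that $\Phi$ is well-defined and holomorphic in $(\bm_0,w)$: the differential operators are bounded linear maps between the stated H\"older spaces, the quadratic term $|\nabla u_i|^2$ and the bilinear term $\mathrm{div}(m_i\nabla u_i)$ are continuous polynomial (hence entire) maps on $C^{2+\alpha,1+\frac{\alpha}{2}}(Q)$, and $\bm\mapsto F_i(\cdot,\bm)$, $\bm\mapsto G_i(\cdot,\bm(\cdot,T))$ are holomorphic by the admissibility hypothesis $\bF,\bG\in\cA^n$ (composition of a holomorphic map into $C^\alpha$ with evaluation, using that $C^{2+\alpha,1+\frac{\alpha}{2}}(Q)$ embeds into $C^{\alpha,\frac{\alpha}{2}}(Q)$ and that the trace at $t=T$ is bounded into $C^{2+\alpha}(\bbT^d)$). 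Since $U(x,\bzero)=0$ for $U\in\cA$, we have $\Phi(\bzero,\bzero)=\bzero$.

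The key step is to compute the partial Fr\'echet derivative $D_w\Phi(\bzero,\bzero):\cX\to\cZ$ and show it is a Banach-space isomorphism. Linearizing at the trivial solution kills the quadratic and bilinear terms and, because $F_i,G_i$ vanish at $\bm=\bzero$ to first order (their linear parts being $\sum_k F_i^{(\be_k)}(x)m_k$ and $\sum_k G_i^{(\be_k)}(x)m_k$ in the notation of \eqref{EQ:power expansion}), yields exactly the linear system \eqref{EQ:linearized system} with $F_k,G_k$ replaced by these coefficient functions and with source data $(f,\tilde f,g,\tilde g)$ running over all of $\cZ$. By \Cref{LEMMA:well-posedness} this linearized system has a unique solution in $\cX$ for every right-hand side, so $D_w\Phi(\bzero,\bzero)$ is bijective; boundedness of the inverse is exactly the a priori estimate in that lemma (after noting the coefficient functions lie in $C^\alpha(\bbT^d)$, resp. $C^{2+\alpha}(\bbT^d)$, which holds because holomorphy into $C^\alpha(\bbT^d)$ forces the Taylor coefficients to lie in $C^\alpha(\bbT^d)$, and the $G_i$ coefficients inherit $C^{2+\alpha}$ regularity if one additionally assumes, as is implicit in the setup, that $\bG$ maps into $C^{2+\alpha}(\bbT^d)$). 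Hence $D_w\Phi(\bzero,\bzero)$ is an isomorphism.

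With these two facts in hand, the holomorphic implicit function theorem on Banach spaces gives $\delta>0$ and a holomorphic map $S:B_\delta(\cY)\to\cX$ with $S(\bzero)=\bzero$ and $\Phi(\bm_0,S(\bm_0))=\bzero$ for all $\|\bm_0\|_{C^{2+\alpha}(\bbT^d)}\le\delta$, which is precisely statement (b); the Lipschitz bound from the implicit function theorem, combined with $S(\bzero)=\bzero$, yields the estimate $\|(\bu,\bm)\|\le C\|\bm_0\|$ in (a), and local uniqueness in the ball of radius $C\delta$ is the local uniqueness clause of the implicit function theorem (after shrinking $\delta$ if necessary). I expect the main obstacle to be the bookkeeping in verifying holomorphy of the composition operators $\bm\mapsto F_i(\cdot,\bm)$ and $\bm\mapsto G_i(\cdot,\bm(\cdot,T))$ as maps between the correct H\"older spaces — in particular making sure the target regularity ($C^{\alpha,\frac{\alpha}{2}}(Q)$ for $F_i$, $C^{2+\alpha}(\bbT^d)$ for $G_i$) is genuinely attained, which is where the admissibility class $\cA$ and the choice of function spaces must be used carefully; the rest is a routine application of the linear theory in \Cref{LEMMA:well-posedness} and the abstract implicit function theorem.
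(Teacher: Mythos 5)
Your proposal is correct and follows essentially the same route as the paper, which proves this result by the implicit function theorem for Banach spaces in the manner of Theorem 3.1 of \cite{LiMoZh-IP23}, with \Cref{LEMMA:well-posedness} supplying exactly the invertibility of the linearization at the trivial solution that you identify. Your remark that the Taylor coefficients of $\bG$ need $C^{2+\alpha}(\bbT^d)$ regularity (beyond the bare admissibility class) is a fair observation, but it is the same implicit assumption the paper makes in \Cref{LEMMA:well-posedness}, so there is no substantive divergence.
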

This local well-posedness result can be proved in the same manner as the proof of~\cite[Theorem 3.1]{LiMoZh-IP23} using the implicit function theorem for Banach spaces, even though the result of ~\cite[Theorem 3.1]{LiMoZh-IP23} is for a single-population MFG system. We will not reproduce the proof here. The most important point about this result is that the solution to the system is holomorphic with respect to $\bm_0(x)$ near $0$. This ensures that the linearization procedure in the next subsection can be performed safely.

\subsection{Multi-linearizations}
\label{SUBSEC:Linearization}

One of the main challenges in the analysis of MFG system~\eqref{EQ:MFG MultiPop} is its nonlinearity. The standard method to handle nonlinearity in forward models of inverse problems is the method of linearization~\cite{Isakov-ARMA93,Isakov-Book06}. Roughly speaking, this method utilizes differential data to simplify inverse problems for nonlinear models to inverse problems for simplified linear models; see~\cite{KrUh-PAMS20,Kian-Nonlinearity23,LuZh-IP24} and references therein for a sense of recent development on the subject. The linearization procedure is standard now and has been utilized in solving inverse problems for mean field game systems~\cite{LiMoZh-IP23,LiZh-arXiv22}.

Here, we develop a multi-linearization for the MFG system~\eqref{EQ:MFG MultiPop} with respect to $\bm_0(x)$. This rigorous justification of the linearization procedure requires the infinite differentiability of the solution map $S$ defined in~\Cref{THM:holomorphic}.

First, we introduce the basic setting. Consider the multipopulation MFG system \eqref{EQ:MFG MultiPop}, and let
\begin{equation}
    m_{i,0}(x;\varepsilon)=\sum_{\ell=1}^N \varepsilon_\ell f_\ell^i(x), \ \ \ N\ge 1
\end{equation}
where $f_\ell^i\in C^{2+\alpha}(\bbT^d)$ satisfies $f_\ell^i(x)\ge 0$, and $\varepsilon=(\varepsilon_1,\varepsilon_2,\dots,\varepsilon_N)\in\bbR_+^N$ with $|\varepsilon|=\sum_{\ell=1}^N|\varepsilon_\ell|$ small enough. Then there exists $\delta>0$, such that $\bm_0\in B_\delta\left([C^{2+\alpha}(\bbT^d)]^n\right)$. Let $(\bu(x,t;\varepsilon),\bm(x,t;\varepsilon))$ be the unique solution of system \eqref{EQ:MFG MultiPop} with respect to $\varepsilon$, which is admitted by \Cref{THM:holomorphic}. Then for $\bF,\bG\in\cA^n$, we have $(\bu(x,t;0),\bm(x,t;0))=(\bzero,\bzero)$.

By \Cref{THM:holomorphic}, we have the following expansion of $\bu(x,t;\varepsilon)$ and $\bm(x,t;\varepsilon)$:
\begin{equation}\label{EQ:expansion w.r.t. epsilon}
    \begin{aligned}
        u_i(x,t;\varepsilon)=\sum_{\ell=1}^N \varepsilon_\ell u_i^{(\ell)}(x,t)+\frac{1}{2}\sum_{\ell_1=1}^N\sum_{\ell_2=1}^N \varepsilon_{\ell_1}\varepsilon_{\ell_2}u_i^{(\ell_1,\ell_2)}(x,t)+o(\varepsilon^2),\\
        m_i(x,t;\varepsilon)=\sum_{\ell=1}^N \varepsilon_\ell m_i^{(\ell)}(x,t)+\frac{1}{2}\sum_{\ell_1=1}^N\sum_{\ell_2=1}^N \varepsilon_{\ell_1}\varepsilon_{\ell_2}m_i^{(\ell_1,\ell_2)}(x,t)+o(\varepsilon^2),
    \end{aligned}
\end{equation}
where we have omitted the higher-order terms for simplicity. Here the first-order coefficients $\bu^{(\ell)}$ and $\bm^{(\ell)}$ are determined by
\begin{equation}
    \begin{aligned}
        u_i^{(\ell)}:=\partial_{\varepsilon_\ell}u_i|_{\varepsilon=0}=\lim_{\varepsilon_\ell\to 0}\frac{u_i(x,t;\varepsilon_\ell {\mathfrak e}_\ell)}{\varepsilon_\ell}\,,\\        m_i^{(\ell)}:=\partial_{\varepsilon_\ell}m_i|_{\varepsilon=0}=\lim_{\varepsilon_\ell\to 0}\frac{m_i(x,t;\varepsilon_\ell \mathfrak e_\ell)}{\varepsilon_\ell}\,,
    \end{aligned}
\end{equation}
where $\mathfrak e_\ell$ is the $\ell$-th element in the standard basis of $\bbR^N$.

We can then construct a new linearized system for $(\bu^{(\ell)},\bm^{(\ell)})$. Here, we take $\ell=1$ as an example. By calculation, we have
\begin{equation}
    \begin{aligned}
        & -\partial_t u_i^{(1)}(x,t)-\Delta u_i^{(1)}(x,t)\\
        = {} & \lim_{\varepsilon\to 0}\frac{1}{\varepsilon_1}\left(-\partial_t u_i(x,t;\varepsilon)-\Delta u_i(x,t;\varepsilon)\right)\\
        = {} & \lim_{\varepsilon\to 0}\frac{1}{\varepsilon_1}\left(-\frac{1}{2}|\nabla u_i(x,t;\varepsilon)|^2+F_i(x,\bm(x,t;\varepsilon))\right)\\
        = {} & \sum_{k=1}^n F_i^{(\be_k)}(x) m_k^{(1)},
    \end{aligned}
\end{equation}
where $\be_k$ is the $n$-dimensional multi-index $\bbeta=(\beta_1, \dots, \beta_n)$ such that $\beta_j=\delta_{jk}$, and
\begin{equation}
    \begin{aligned}
        & \partial_t m_i^{(1)}(x,t)-\Delta m_i^{(1)}(x,t)\\
        = {} & \lim_{\varepsilon\to 0}\frac{1}{\varepsilon_1}\left(\partial_t m_i(x,t;\varepsilon)-\Delta m_i(x,t;\varepsilon)\right)\\
        = {} & \lim_{\varepsilon\to 0}\frac{1}{\varepsilon_1}\left({\rm div}(m(x,t;\varepsilon)\nabla u(x,t;\varepsilon))\right)\\
        = {} & 0.
    \end{aligned}
\end{equation}
Therefore we have that $(\bu^{(1)},\bm^{(1)})$ satisfies the following system:
\begin{equation}\label{EQ:MFG Lin Order 1}
    \left\{\begin{array}{>{\displaystyle}lc}
        -\partial_t u_i^{(1)}-\Delta u_i^{(1)}=\sum_{k=1}^n F_i^{(\be_k)}(x) m_k^{(1)}, &(x,t)\in\bbT^d\times(0,T),\\
        \partial_t m_i^{(1)}-\Delta m_i^{(1)}=0, &(x,t)\in\bbT^d\times(0,T),\\
        u_i^{(1)}(x,T)=\sum_{k=1}^n G_i^{(\be_k)}(x) m_k^{(1)}(x,T),\ m_i^{(1)}(x,0)=f_1^i(x), &x\in\bbT^d.
    \end{array}\right.
\end{equation}
This first-order linearized system will be our starting point in the proof of the uniqueness of the inverse problem. It highlights one of the main features of the multipopulation model we study in this paper: the existence of the coupling, in the form of a summation of contributions from different populations, between different populations. We will need to decouple the contributions from different populations in the inverse problem; see ~\Cref{SEC:Recon Full} and~\Cref{SEC:Recon Partial}.

Higher-order linearized systems can be derived in a similar way. The first-order coefficients $\bu^{(\ell_1,\ell_2)}$ and $\bm^{(\ell_1,\ell_2)}$ in \eqref{EQ:expansion w.r.t. epsilon} are determined by
\begin{equation}
    \begin{aligned}
        u_i^{(\ell_1,\ell_2)} :=\partial_{\varepsilon_{\ell_1}}\partial_{\varepsilon_{\ell_2}}u_i|_{\varepsilon=0} =\lim_{\varepsilon\to 0}\frac{u_i(x,t;\varepsilon_{\ell_1}{\mathfrak e}_{\ell_1}+\varepsilon_{\ell_2}{\mathfrak e}_{\ell_2})-\varepsilon_{\ell_1}u_i^{(\ell_1)}(x,t)-\varepsilon_{\ell_2}u_i^{(\ell_2)}(x,t)}{\frac{1}{2}\varepsilon_{\ell_1}\varepsilon_{\ell_2}},\\
        m_i^{(\ell_1,\ell_2)} :=\partial_{\varepsilon_{\ell_1}}\partial_{\varepsilon_{\ell_2}}m_i|_{\varepsilon=0} =\lim_{\varepsilon\to 0}\frac{m_i(x,t;\varepsilon_{\ell_1}{\mathfrak e}_{\ell_1}+\varepsilon_{\ell_2}{\mathfrak e}_{\ell_2})-\varepsilon_{\ell_1}m_i^{(\ell_1)}(x,t)-\varepsilon_{\ell_2}m_i^{(\ell_2)}(x,t)}{\frac{1}{2}\varepsilon_{\ell_1}\varepsilon_{\ell_2}}.
    \end{aligned}
\end{equation}

We can also construct corresponding system for $(\bu^{(\ell_1,\ell_2)},\bm^{(\ell_1,\ell_2)})$. Again, we take $(\ell_1,\ell_2)=(1,2)$ for example. By direct calculation, we have
\begin{equation}
    \begin{aligned}
        & -\partial_t u_i^{(1,2)}(x,t)-\Delta u_i^{(1,2)}(x,t)\\
        = {} & \partial_{\varepsilon_1}\partial_{\varepsilon_2}\left(-\frac{1}{2}|\nabla u_i(x,t;\varepsilon)|^2+F_i(x,\bm(x,t;\varepsilon))\right)|_{\varepsilon=0}\\
        = {} & -\nabla u_i^{(1)}\cdot\nabla u_i^{(2)}+\sum_{k=1}^n F_i^{(\be_k)}(x) m_k^{(1,2)}+\sum_{k=1}^n \sum_{p=1}^n F_i^{(\be_k+\be_p)}(x)m_k^{(1)}m_p^{(2)},
    \end{aligned}
\end{equation}
and
\begin{equation}
    \begin{aligned}
        & \partial_t m_i^{(1,2)}(x,t)-\Delta m_i^{(1,2)}(x,t)\\
        = {} & \partial_{\varepsilon_1}\partial_{\varepsilon_2}{\rm div}(m(x,t;\varepsilon)\nabla u(x,t;\varepsilon))|_{\varepsilon=0}\\
        = {} & {\rm div}(m_i^{(1)}\nabla u_i^{(2)}+m_i^{(2)}\nabla u_i^{(1)}).
    \end{aligned}
\end{equation}
Therefore we have $(\bu^{(1,2)},\bm^{(1,2)})$ satisfies the following system:
\begin{equation}\nonumber
    \left\{\begin{array}{>{\displaystyle}lc}
        -\partial_t u_i^{(1,2)}-\Delta u_i^{(1,2)}+\nabla u_i^{(1)}\cdot\nabla u_i^{(2)}\\
        \qquad=\sum_{k=1}^n F_i^{(\be_k)}(x) m_k^{(1,2)}+\sum_{k=1}^n \sum_{p=1}^n F_i^{(\be_k+\be_p)}(x)m_k^{(1)}m_p^{(2)}, &(x,t)\in\bbT^d\times(0,T),\\
        \partial_t m_i^{(1,2)}-\Delta m_i^{(1,2)}={\rm div}(m_i^{(1)}\nabla u_i^{(2)}+m_i^{(2)}\nabla u_i^{(1)}), &(x,t)\in\bbT^d\times(0,T),\\
        u_i^{(1,2)}(x,T)\\
        \qquad=\sum_{k=1}^n G_i^{(\be_k)}(x) m_k^{(1,2)}(x,T)+\sum_{k=1}^n \sum_{p=1}^n G_i^{(\be_k+\be_p)}(x)m_k^{(1)}m_p^{(2)}(x,T),\\
        m_i^{(1,2)}(x,0)=0, &x\in\bbT^d.
    \end{array}\right.
\end{equation}

Inductively, we can derive a sequence of systems by considering
\begin{equation}
    \begin{aligned}
        u_i^{(\ell_1,\ell_2,\dots,\ell_M)}=\partial_{\varepsilon_{\ell_1}}\partial_{\varepsilon_{\ell_2}}\cdots\partial_{\varepsilon_{\ell_M}}u_i(x,t;\varepsilon)|_{\varepsilon=0},\\
        m_i^{(\ell_1,\ell_2,\dots,\ell_M)}=\partial_{\varepsilon_{\ell_1}}\partial_{\varepsilon_{\ell_2}}\cdots\partial_{\varepsilon_{\ell_M}}m_i(x,t;\varepsilon)|_{\varepsilon=0}.
    \end{aligned}
\end{equation}
Notice that all these different order linearizations will be helpful in proving our main results.

\section{Reconstructing \texorpdfstring{$(\bF,\bG)$}{} from multipopulation data}
\label{SEC:Recon Full}

We are now in a position to state and prove our main results about the unique determination of cost functions. We start with the setup where we  have measured data on all populations, encoded in the map $\cM_{(\bF, \bG)}$ given in~\eqref{EQ:Data 1}

First, we state an auxiliary lemma as follows, which is standard and can be proved using integration by parts.

\begin{lemma}\label{LEMMA:Orthogonal relation}
    Let $u$ be the solution of the following equation:
    \begin{equation}
        \left\{\begin{array}{>{\displaystyle}lc}
            -\partial_t u(x,t)-\Delta u(x,t)=f(x,t), &(x,t)\in\bbT^d\times(0,T),\\
            u(x,T)=u_T(x),\ u(x,0)=0, &x\in\bbT^d.
        \end{array}\right.
    \end{equation}
    Let $w(x,t)$ be a solution of $\partial_t w(x,t)-\Delta w(x,t)=0$ in $\bbT^d\times(0,T)$, then we have
    \begin{equation}
        \int_Q f(x,t)w(x,t)dxdt=\int_{\bbT^d}u_T(x)w(x,T)dx.
    \end{equation}
\end{lemma}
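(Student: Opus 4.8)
This is a standard duality (adjointness) identity: it comes from pairing the backward parabolic problem satisfied by $u$ against the homogeneous forward heat equation satisfied by $w$. The only ingredients are integration by parts in $x$ and in $t$, the absence of a spatial boundary on $\bbT^d$, and the two prescribed conditions $u(\cdot,0)=0$ and $u(\cdot,T)=u_T$. Concretely, I would multiply $-\partial_t u-\Delta u=f$ by $w$ and integrate over $Q=\bbT^d\times[0,T]$, obtaining $\int_Q f w\,dx\,dt=-\int_Q (\partial_t u)w\,dx\,dt-\int_Q (\Delta u)w\,dx\,dt$, and then move all derivatives onto $w$.

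For the spatial term, two integrations by parts in $x$ give $\int_{\bbT^d}(\Delta u)w\,dx=\int_{\bbT^d}u(\Delta w)\,dx$ for each fixed $t$, with no boundary contribution since $\bbT^d$ is closed (all functions being $1$-periodic in each $x_j$). For the temporal term, integration by parts in $t$ gives $\int_0^T(\partial_t u)w\,dt=\bigl[uw\bigr]_{t=0}^{t=T}-\int_0^T u(\partial_t w)\,dt$; the lower endpoint vanishes because $u(\cdot,0)=0$, while the upper endpoint is $\int_{\bbT^d}u_T(x)w(x,T)\,dx$ after integrating in $x$. Collecting these,
\[
\int_Q f\,w\,dx\,dt=\int_Q u\,(\partial_t w-\Delta w)\,dx\,dt\;\pm\;\int_{\bbT^d}u_T(x)\,w(x,T)\,dx ,
\]
and the volume term drops out identically because $w$ solves $\partial_t w-\Delta w=0$ on $Q$, leaving exactly the asserted identity. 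The one point to handle carefully at this last step is the sign and endpoint bookkeeping: it is the \emph{terminal} trace $w(\cdot,T)$, not $w(\cdot,0)$, that survives, and this is forced precisely by the homogeneous initial condition on $u$.

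All of the above is legitimate at the regularity at hand: in the setting where this lemma is used, $u\in C^{2+\alpha,1+\frac{\alpha}{2}}(Q)$ by \Cref{LEMMA:well-posedness}, and the functions $w$ to which it is applied (e.g.\ the linearized densities $m_k^{(\ell)}$) lie in the same space, so the traces at $t=0$ and $t=T$ exist and Fubini plus the fundamental theorem of calculus apply with no qualification. I do not expect a genuine obstacle here; the argument is routine. If anything requires attention it is only (i) confirming the regularity suffices to integrate by parts once in $t$ and twice in $x$ — immediate from the $C^{2+\alpha,1+\frac{\alpha}{2}}(Q)$ estimates — and (ii) the sign/endpoint bookkeeping noted above; periodicity guarantees that no spatial boundary terms ever enter.
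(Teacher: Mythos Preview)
Your proposal is correct and takes exactly the same route as the paper's proof: multiply $-\partial_t u-\Delta u=f$ by $w$, integrate over $Q$, move the derivatives onto $w$ via integration by parts in $t$ and (twice) in $x$, and use periodicity of $\bbT^d$, the conditions $u(\cdot,0)=0$, $u(\cdot,T)=u_T$, and $\partial_t w-\Delta w=0$ to kill all but the terminal boundary term. Your explicit flag about the sign/endpoint bookkeeping is the only place requiring care, and the paper's own three-line computation treats it at the same level of detail.
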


\begin{proof}
    By direct calculation, we have
    \begin{equation}\nonumber
        \begin{aligned}
            \int_Q f(x,t)w(x,t)dxdt &=\int_Q (-\partial_t u(x,t)-\Delta u(x,t))w(x,t)dxdt\\
            &=\left.\int_{\bbT^d}(u(x,t)w(x,t))dx\right|_0^T+\int_Q u(x,t)(\partial_t w(x,t)-\Delta w(x,t))dxdt\\
            &=\int_{\bbT^d}u_T(x)w(x,T)dx,
        \end{aligned}
    \end{equation}
    which has already proved the lemma.
\end{proof}

Here, we state our main results as follows, which shows that we can reconstruct the running cost $\bF$ and terminal cost $\bG$ from the measurement map $\cM$. The main idea is to consider different order Taylor coefficients of $\bF$ and $\bG$, and reconstruct them respectively using the multi-linearization method in \Cref{SUBSEC:Linearization}. The proof is similar to that in~\cite{LiMoZh-IP23} for a single-population MFG system.

\begin{theorem}\label{THM:full recon}
    Let $\bF^j, \bG^j\in\cA^n$ ($j=1,2$) and $\cM_{\bF^j,\bG^j}$ be the measurement map associated to the following system:
    \begin{equation}\label{EQ:MFG MultiPop j}
        \left\{\begin{array}{>{\displaystyle}lc}
            -\partial_t u_i^j(x,t)-\Delta u_i^j(x,t)+\frac{1}{2}|\nabla u_i^j(x,t)|^2=F_i^j(x,\bm^j(x,t)), &(x,t)\in\bbT^d\times(0,T),\\
            \partial_t m_i^j(x,t)-\Delta m_i^j(x,t)-{\rm div}(m_i^j(x,t)\nabla u_i^j)=0, &(x,t)\in\bbT^d\times(0,T)\\
            u_i^j(x,T)=G_i^j(x,\bm(x,T)),\ m_i^j(x,0)=m_{i,0}(x), &x\in \bbT^d.
        \end{array}\right.
    \end{equation}
    If there exists $\delta>0$, such that for any $\bm_0\in B_\delta\left([C^{2+\alpha}(\bbT^d)]^n\right)$, one has
    \[
        \cM_{\bF^1,\bG^1}(\bm_0)=\cM_{\bF^2,\bG^2}(\bm_0),
    \]
    then it holds that
    \[
        (\bF^1(x,\bz),\bG^1(x,\bz))=(\bF^2(x,\bz),\bG^2(x,\bz))\ \text{in}\ \bbT^d\times\bbR^n.
    \]
\end{theorem}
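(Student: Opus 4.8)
Since $\bF^j,\bG^j\in\cA^n$ depend holomorphically on $\bz$, it suffices to show that all the Taylor coefficients in the expansion~\eqref{EQ:power expansion} coincide, i.e.\ $F_i^{(\bbeta),1}=F_i^{(\bbeta),2}$ and $G_i^{(\bbeta),1}=G_i^{(\bbeta),2}$ for every $i$ and every $n$-multi-index $\bbeta$ with $|\bbeta|\ge1$. I plan to prove this by induction on $|\bbeta|$, feeding both systems~\eqref{EQ:MFG MultiPop j} the common initial datum $m_{i,0}(x;\varepsilon)=\sum_{\ell=1}^N\varepsilon_\ell f_\ell^i(x)$ (with the $f_\ell^i\ge0$ at our disposal) and differentiating in $\varepsilon$ as in~\Cref{SUBSEC:Linearization}. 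The hypothesis $\cM_{\bF^1,\bG^1}=\cM_{\bF^2,\bG^2}$ together with the holomorphy of the solution map in~\Cref{THM:holomorphic} yields $u_i^{(\ell_1,\dots,\ell_r),1}(x,0)=u_i^{(\ell_1,\dots,\ell_r),2}(x,0)$ for all $i$, all orders $r$, and all index tuples, and this is the only use of the data.

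\textbf{Reduction to a scalar identity.} At order one, $m_k^{(\ell),1}=m_k^{(\ell),2}=:m_k^{(\ell)}$ (heat equation with common datum), so $v_i:=u_i^{(\ell),1}-u_i^{(\ell),2}$ satisfies, via~\eqref{EQ:MFG Lin Order 1}, $-\partial_tv_i-\Delta v_i=\sum_k(F_i^{(\be_k),1}-F_i^{(\be_k),2})\,m_k^{(\ell)}$ with $v_i(\cdot,0)=0$ and $v_i(\cdot,T)=\sum_k(G_i^{(\be_k),1}-G_i^{(\be_k),2})\,m_k^{(\ell)}(\cdot,T)$. Taking $f_\ell^k\equiv0$ for $k\ne k_0$ decouples the populations, and~\Cref{LEMMA:Orthogonal relation} turns the $(i,k_0)$ component into
\begin{equation}\nonumber
\int_Q(F_i^{(\be_{k_0}),1}-F_i^{(\be_{k_0}),2})\,m_{k_0}^{(\ell)}\,w\,dx\,dt=\int_{\bbT^d}(G_i^{(\be_{k_0}),1}-G_i^{(\be_{k_0}),2})\,m_{k_0}^{(\ell)}(\cdot,T)\,w(\cdot,T)\,dx
\end{equation}
for every solution $w$ of $\partial_tw-\Delta w=0$ on $Q$. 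For the inductive step one assumes all coefficients of order $\le M-1$ match; a short secondary induction on $r\le M-1$ then shows the linearized fields $u_i^{(\ell_1,\dots,\ell_r)},m_i^{(\ell_1,\dots,\ell_r)}$ of order $\le M-1$ coincide for the two systems (each is governed by already-matched coefficients and already-matched lower-order fields), and the same holds at order $M$ for the $m$-fields because their equation contains no cost functions. Subtracting the two order-$M$ HJB-linearized equations and applying Fa\`a di Bruno's formula, every term cancels except those carrying a Taylor coefficient of exact order $M$; choosing the $f_\ell^k$ to be supported in $k$ along a partition of $\{\ell_1,\dots,\ell_M\}$ adapted to a target $\bbeta$ with $|\bbeta|=M$ isolates one coefficient, so that $\delta u_i^{(\ell_1,\dots,\ell_M)}$ solves a system of exactly the same shape as at order one: source $(F_i^{(\bbeta),1}-F_i^{(\bbeta),2})\mu$, terminal value $(G_i^{(\bbeta),1}-G_i^{(\bbeta),2})\mu(\cdot,T)$, zero initial value, with $\mu$ a product of first-order heat solutions. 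Thus in every case the claim reduces to: if $\int_Qa\,\mu\,w\,dx\,dt=\int_{\bbT^d}b\,\mu(\cdot,T)\,w(\cdot,T)\,dx$ for all heat solutions $\mu,w$ on $Q$, then $a\equiv b\equiv0$ — and by multilinearity in the initial data $\mu,w$ may be taken arbitrary (complex) heat solutions, and, making all but one factor of $\mu$ the constant $1$, $\mu$ a single one.

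\textbf{Separating $\bF$ from $\bG$, and the main obstacle.} To settle the scalar claim I would plug in the explicit torus solutions $\mu(x,t)=e^{2\pi ip\cdot x-4\pi^2|p|^2t}$ and $w(x,t)=e^{2\pi iq\cdot x+4\pi^2|q|^2(T-t)}$ and compute both integrals, obtaining
\begin{equation}\nonumber
\frac{e^{4\pi^2(|p|^2+|q|^2)T}-1}{4\pi^2(|p|^2+|q|^2)}\,\widehat a(p+q)=\widehat b(p+q),\qquad p,q\in\bbZ^d,
\end{equation}
for the Fourier coefficients $\widehat a,\widehat b$; fixing $\xi=p+q$ and letting $|p|^2+|q|^2\to\infty$ (possible with $p+q$ fixed) the left-hand factor blows up, forcing $\widehat a(\xi)=0$, hence $a\equiv0$ and then $b\equiv0$. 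Iterating over all $i$, $k_0$ and all $\bbeta$ closes the induction, and holomorphy in $\bz$ upgrades coincidence of all Taylor coefficients to $(\bF^1,\bG^1)=(\bF^2,\bG^2)$ on $\bbT^d\times\bbR^n$. I expect the real work to be the combinatorial bookkeeping of the higher-order linearizations — specifically, verifying cleanly that the difference of the order-$M$ HJB-linearized equations isolates precisely the order-$M$ coefficients, which rests on the secondary induction that all lower-order linearized fields already agree — rather than the population decoupling (which merely sets the unwanted populations' initial densities to zero) or the $\bF$/$\bG$ separation (which the ``backward-growing'' multiplier $w$ handles, since it makes the spacetime integral dominate the terminal one).
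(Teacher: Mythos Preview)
Your proposal is correct and follows essentially the same route as the paper: multilinearization of the system, induction on the Taylor order, decoupling of populations by zeroing out all but one initial density, and reduction via \Cref{LEMMA:Orthogonal relation} to a scalar Fourier identity. The only noticeable variation is in the separation of $\widehat a$ and $\widehat b$: the paper writes each $\xi$ as $\xi_1+\xi_2$ in two ways with different values of $|\xi_1|^2+|\xi_2|^2$ and solves the resulting $2\times2$ linear system, whereas you let $|p|^2+|q|^2\to\infty$ with $p+q=\xi$ fixed and use that the multiplier $(e^{4\pi^2(|p|^2+|q|^2)T}-1)/(4\pi^2(|p|^2+|q|^2))$ blows up; both arguments are equally short and valid.
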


\begin{proof}
    We only need to prove that every term in the Taylor expansion of $F_i^j$ and $G_i^j$ coincide, that is:
    \begin{equation}\label{EQ:Unique expansion F}
        F_i^{1(\balpha)}(x)=F_i^{2(\balpha)}(x),\ G_i^{1(\balpha)}(x)=G_i^{2(\balpha)}(x),\ \balpha\in\bbZ_+^n,\ i=1,2,\dots,n.
    \end{equation}
    To this purpose, we divide our proof into three steps. For simplicity and unity of notations, we will use
    \[
        F_i^{(k)}:=F_i^{(\be_k)},\ F_i^{(k,p)}:=F_i^{(\be_k+\be_p)}
    \]
    and so on to denote different orders of Taylor coefficients of $F$ in this proof and the rest of this paper.

    \textbf{Step I.} First, we implement the first order linearization to system \eqref{EQ:MFG MultiPop j} and show that \eqref{EQ:Unique expansion F} holds for all $|\balpha|=1$. In this way, we can obtain the following system:
    \begin{equation}
        \left\{\begin{array}{>{\displaystyle}lc}
            -\partial_t u_i^{j(1)}-\Delta u_i^{j(1)}=\sum_{k=1}^n F_i^{j(k)}(x) m_k^{j(1)}, &(x,t)\in\bbT^d\times(0,T),\\
            \partial_t m_i^{j(1)}-\Delta m_i^{j(1)}=0, &(x,t)\in\bbT^d\times(0,T),\\
            u_i^{j(1)}(x,T)=\sum_{k=1}^n G_i^{j(k)}(x) m_k^{j(1)}(x,T),\ m_i^{j(1)}(x,0)=f_1^i(x), &x\in\bbT^d.
        \end{array}\right.
    \end{equation}
    Then obviously we have $\bm^{1(1)}(x,t)=\bm^{2(1)}(x,t)=:\bm^{(1)}(x,t)$.
    
    Let $\overline{\bu}^{(1)}=\bu^{1(1)}-\bu^{2(1)}$, then $\overline{\bu}^{(1)}$ satisfies
    \begin{equation}
        \left\{\begin{array}{>{\displaystyle}lc}
            -\partial_t \overline{u}_i^{(1)}-\Delta \overline{u}_i^{(1)}=\sum_{k=1}^n (F_i^{1(k)}(x)-F_i^{2(k)}(x))m_k^{(1)}, &(x,t)\in\bbT^d\times(0,T),\\
            \overline{u}_i^{(1)}(x,T)=\sum_{k=1}^n (G_i^{1(k)}(x)-G_i^{2(k)}(x))m_k^{(1)}(x,T), &x\in\bbT^d,\\
            \overline{u}_i^{(1)}(x,0) = 0, &x\in\bbT^d,
        \end{array}\right.
    \end{equation}
    where the last equation comes from the data. By \Cref{LEMMA:Orthogonal relation}, we have
    \begin{equation}\label{EQ:First order orthogonal}
        \int_Q \sum_{k=1}^n (F_i^{1(k)}-F_i^{2(k)})m_k^{(1)}w(x,t) dxdt=\int_{\bbT^d}\sum_{k=1}^n (G_i^{1(k)}-G_i^{2(k)})m_k^{(1)}w(x,T)dx
    \end{equation}
    for all $w(x,t),m_k^{(1)}(x,t)$ that are solutions of the heat equation $\partial_t v(x,t)-\Delta v(x,t)=0$ in $Q$.
    
    Let $\xi_1,\xi_2\in\bbZ^d\backslash\{0\}$, $\xi=\xi_1+\xi_2$, and $M\in\bbZ_+$. Let
    \begin{equation}
        w(x,t)=e^{-4\pi^2|\xi_1|^2 t+2\pi \mathfrak i\xi_1\cdot x},
    \end{equation}
    \begin{equation}
        m_1^{(1)}(x,t)=e^{-4\pi^2|\xi_2|^2 t+2\pi \mathfrak i\xi_2\cdot x}+M.
    \end{equation}
    and $m_k^{(1)}(x,t)=0$ for all $k\neq 1$. Here, the value of $M$ ensures the nonnegativity of $f_1^1(x) = m_1^{(1)}(x,0)$, and therefore the nonnegativity of $m_1$. Denote
    \begin{equation}
        \begin{aligned}
            a_\xi:=\int_{\bbT^d}(F_i^{1(1)}-F_i^{2(1)})e^{2\pi \mathfrak i\xi\cdot x}dx,\\
            b_\xi:=\int_{\bbT^d}(G_i^{1(1)}-G_i^{2(1)})e^{2\pi \mathfrak i\xi\cdot x}dx,
        \end{aligned}
    \end{equation}
    then through direct calculations, \eqref{EQ:First order orthogonal} leads to
    \begin{equation}
        \frac{1-e^{-4\pi^2(|\xi_1|^2+|\xi_2|^2)T}}{4\pi^2(|\xi_1|^2+|\xi_2|^2)}a_\xi+M\frac{1-e^{-4\pi^2|\xi_1|^2 T}}{4\pi^2|\xi_1|^2}a_{\xi_1}=e^{-4\pi^2(|\xi_1|^2+|\xi_2|^2)T}b_\xi+M e^{-4\pi^2|\xi_1|^2 T}b_{\xi_1}.
    \end{equation}
    By taking $M=1,2$ and subtracting the corresponding equations, we have
    \begin{equation}
        \frac{1-e^{-4\pi^2(|\xi_1|^2+|\xi_2|^2)T}}{4\pi^2(|\xi_1|^2+|\xi_2|^2)}a_\xi=e^{-4\pi^2(|\xi_1|^2+|\xi_2|^2)T}b_\xi.
    \end{equation}

    Notice that for any $\xi\in\bbZ^d$, there exist $\xi_1,\xi_2,\xi_1',\xi_2'\in\bbZ^d\backslash\{0\}$, such that $\xi=\xi_1+\xi_2=\xi_1'+\xi_2'$, but $|\xi_1|^2+|\xi_2|^2\neq |\xi_1'|^2+|\xi_2'|^2$. Therefore, we have $a_\xi=b_\xi=0$ for all $\xi\in\bbZ^d$. This implies the Fourier series of $F_i^{1(1)}-F_i^{2(1)}$ and $G_i^{1(1)}-G_i^{2(1)}$ are both 0, hence $F_i^{1(1)}=F_i^{2(1)}$ and $G_i^{1(1)}=G_i^{2(1)}$.

    The same argument holds for all $k$, therefore
    \begin{equation}\label{EQ:All i}
        F_i^{1(k)}(x)=F_i^{2(k)}(x),\ G_i^{1(k)}(x)=G_i^{2(k)}(x),\ i,k=1,2,\dots,n.
    \end{equation}
    Then, by the uniqueness result of the linearized system shown in \Cref{LEMMA:well-posedness}, we have $\bu^{1(1)}(x,t)=\bu^{2(1)}(x,t)$.

    \textbf{Step II.} We continue to implement the second order linearization to system \eqref{EQ:MFG MultiPop j} and obtain:
    \begin{equation}
        \left\{\begin{array}{>{\displaystyle}lc}
            -\partial_t u_i^{j(1,2)}-\Delta u_i^{j(1,2)}+\nabla u_i^{j(1)}\cdot\nabla u_i^{j(2)}\\
            \qquad=\sum_{k=1}^n F_i^{j(k)}(x) m_k^{j(1,2)}+\sum_{k=1}^n \sum_{p=1}^n F_i^{j(k,p)}(x)m_k^{j(1)}m_p^{j(2)}, &(x,t)\in\bbT^d\times(0,T),\\
            \partial_t m_i^{j(1,2)}-\Delta m_i^{j(1,2)}={\rm div}(m_i^{j(1)}\nabla u_i^{j(2)}+m_i^{j(2)}\nabla u_i^{j(1)}), &(x,t)\in\bbT^d\times(0,T),\\
            u_i^{j(1,2)}(x,T)\\
            \qquad=\sum_{k=1}^n G_i^{j(k)}(x) m_k^{j(1,2)}(x,T)+\sum_{k=1}^n \sum_{p=1}^n G_i^{j(k,p)}(x)m_k^{j(1)}m_p^{j(2)}(x,T),\\
            m_i^{j(1,2)}(x,0)=0, &x\in\bbT^d.
        \end{array}\right.
    \end{equation}
    By Step I, we have
    \[
        \bu^{1(1)}(x,t)=\bu^{2(1)}(x,t),\ \bu^{1(2)}(x,t)=\bu^{2(2)}(x,t),
    \]
    and
    \[
        \bm^{1(1)}(x,t)=\bm^{2(1)}(x,t),\ \bm^{1(2)}(x,t)=\bm^{2(2)}(x,t).
    \]
    Therefore we have
    \[
        \bm^{1(1,2)}(x,t)=\bm^{2(1,2)}(x,t).
    \]

    Let $\overline{\bu}^{(2)}=\bu^{1(1,2)}-\bu^{2(1,2)}$, then $\overline{\bu}^{(1,2)}$ satisfies
    \begin{equation}
        \left\{\begin{array}{>{\displaystyle}lc}
            -\partial_t \overline{u}_i^{(2)}-\Delta \overline{u}_i^{(2)}=\sum_{k=1}^n \sum_{p=1}^n (F_i^{1(k,p)}(x)-F_i^{2(k,p)}(x))m_k^{(1)}m_p^{(2)}, &(x,t)\in\bbT^d\times(0,T),\\
            \overline{u}_i^{(2)}(x,T)=\sum_{k=1}^n \sum_{p=1}^n (G_i^{1(k,p)}(x)-G_i^{2(k,p)}(x))m_k^{(1)}m_p^{(2)}(x,T), &x\in\bbT^d,\\
            \overline{u}_i^{(2)}(x,0)=0, &x\in\bbT^d,
        \end{array}\right.
    \end{equation}
    where the last equation comes from the data. By Lemma \ref{LEMMA:Orthogonal relation}, we have
    \begin{equation}
        \int_Q \sum_{k=1}^n \sum_{p=1}^n (F_i^{1(k,p)}-F_i^{2(k,p)})m_k^{(1)}m_p^{(2)}w dxdt=\int_{\bbT^d}\sum_{k=1}^n \sum_{p=1}^n (G_i^{1(k,p)}-G_i^{2(k,p)})m_k^{(1)}m_p^{(2)}w(x,T)dx
    \end{equation}
    for all $w(x,t),m_k^{(1)}(x,t),m_p^{(2)}(x,t)$ that are solutions of the heat equation $\partial_t v(x,t)-\Delta v(x,t)=0$ in $\bbT^d$. Then, by a similar argument to Step I, we can show that
    \begin{equation}
        F_i^{1(k,p)}(x)=F_i^{2(k,p)}(x),\ G_i^{1(k,p)}(x)=G_i^{2(k,p)}(x),\ i,k,p=1,2,\dots,n.
    \end{equation}

    \textbf{Step III.} The only thing left is to complete the proof by induction. The process of induction is already implied in Step II, and here, we show it more clearly. Similar to the first and second order linearization, we can obtain the $s$-th ($s\ge 2$) order linearization of system \eqref{EQ:MFG MultiPop j}:
    \begin{equation}
        \left\{\begin{array}{>{\displaystyle}lc}
            -\partial_t u_i^{j(1,\dots,s)} - \Delta u_i^{j(1,\dots,s)}\\
            \qquad=\sum_{k=1}^n F_i^{j(k)}(x)m_k^{j(1,\dots,s)}\\
            \qquad\ +\sum_{k_1,\dots,k_s=1}^n F_i^{j(k_1,\dots,k_s)}(x) m_{k_1}^{j(1)}\cdots m_{k_s}^{j(s)}+l.o.t., &(x,t)\in \bbT^d \times (0,T),\\
            \partial_t m_i^{j(1,\dots,n)} - \Delta m_i^{j(1,\dots,n)} = l.o.t., &(x,t)\in \bbT^d \times (0,T),\\
            u_j^{(1,\dots,n)}(x,T)\\
            \qquad=\sum_{k=1}^n G_i^{j(k)}(x)m_k^{j(1,\dots,s)}(x,T)\\
            \qquad\ +\sum_{k_1,\dots,k_s=1}^n G_i^{j(k_1,\dots,k_s)}(x) m_{k_1}^{j(1)}\cdots m_{k_s}^{j(s)}(x,T)+{\rm l.o.t.},\\
            m_j^{(1,\dots,n)}(x,0) = 0, &x\in\bbT^d,
        \end{array}\right.
    \end{equation}
    where $l.o.t.$ stands for lower-order terms in the linearization process.
    
    Let $s\ge 2$, and assume that all lower-order terms are independent of $j$. Then we have $\bm^{1(1,\dots,n)}(x,t)=\bm^{2(1,\dots,n)}(x,t)$.
    
    Let $\overline{\bu}^{(s)}=\bu^{1(1,\dots,s)}-\bu^{2(1,\dots,s)}$, then $\overline{\bu}^{(s)}$ satisfies
    \begin{equation}
        \left\{\begin{array}{>{\displaystyle}lc}
            -\partial_t \overline{u}_i^{(s)}-\Delta \overline{u}_i^{(s)}=\sum_{k_1,\dots,k_s=1}^n (F_i^{1(k_1,\dots,k_s)}(x)-F_i^{2(k_1,\dots,k_s)}(x)) m_{k_1}^{(1)}\cdots m_{k_s}^{(s)}, &(x,t)\in\bbT^d\times(0,T),\\
            \overline{u}_i^{(s)}(x,T)=\sum_{k_1,\dots,k_s=1}^n (G_i^{1(k_1,\dots,k_s)}(x)-G_i^{2(k_1,\dots,k_s)}(x)) m_{k_1}^{(1)}\cdots m_{k_s}^{(s)}(x,T), &x\in\bbT^d.
        \end{array}\right.
    \end{equation}
    By Lemma \ref{LEMMA:Orthogonal relation}, we have
    \begin{equation}
        \begin{aligned}
            &\int_Q \sum_{k_1,\dots,k_s=1}^n (F_i^{1(k_1,\dots,k_s)}(x)-F_i^{2(k_1,\dots,k_s)}(x)) m_{k_1}^{(1)}\cdots m_{k_s}^{(s)}w dxdt\\
            &=\int_{\bbT^d}\sum_{k_1,\dots,k_s=1}^n (G_i^{1(k_1,\dots,k_s)}(x)-G_i^{2(k_1,\dots,k_s)}(x)) m_{k_1}^{(1)}\cdots m_{k_s}^{(s)}w(x,T)dx.
        \end{aligned}
    \end{equation}
    Then, following the same arguments in Steps I and II, we have
    \begin{equation}\label{EQ:s-order recon}
        F_i^{1(\balpha)}(x)=F_i^{2(\balpha)}(x),\ G_i^{1(\balpha)}(x)=G_i^{2(\balpha)}(x),\ |\balpha|=s,\ i=1,2,\dots,n.
    \end{equation}

    By induction, \eqref{EQ:s-order recon} holds for all $s\in\bbZ_+$. Therefore by the admissible conditions of $\bF(x,\bz)$ and $\bG(x,\bz)$, we have
    \[
        (\bF^1(x,\bz),\bG^1(x,\bz))=(\bF^2(x,\bz),\bG^2(x,\bz)),
    \]
    and the proof is completed.
\end{proof}

In the case when all the populations are decoupled, that is, $F_i$ and $G_i$ depend only on $m_i$, the inverse problem degenerates to that for the single-population case in~\cite{LiMoZh-IP23}, as we will be able to solve the problem for each population independently (since the coupling through the summation of $k$ in the first-order linearization~\eqref{EQ:MFG Lin Order 1} does not exist anymore).  

\section{Reconstructions from single-population data}
\label{SEC:Recon Partial}

We now consider the case where we have data only from one population of the system, that is, data encoded in the map $\wt \cM_{(\bF, \bG)}$ defined in ~\eqref{EQ:Data 2}. With such data, it is generally impossible to reconstruct the cost functions in very general forms. One obvious case is when the populations are all independent. In this case, measuring data from one population would not give us any information about a different population.

It turns out that we can still have unique reconstructions in some special scenarios.

\subsection{Population-independent cost functions}

The first case special case is when the cost functions $F_i$ and $G_i$ are the same for all populations, that is, $F_i(x, \bz)=F(x,\bz)$ and $G_i(x, \bz)=G(x, \bz)$ for all $1\le i\le n$. Therefore, $\bF\in\cC_1$ and $\bG\in\cC_1$ with $\cC_1$ the set defined in~\eqref{EQ:Class 1}.

The following result says that we can reconstruct the two cost functions from data given by the map $\wt \cM$ defined in \eqref{EQ:Data 2}.
\begin{theorem}\label{THM:part recon}
    Let $\bF^j\in\cA^n\cap\cC_1$, $\bG^j\in\cA^n\cap\cC_1$ ($j=1,2$), and $\wt \cM_{(\bF^j,\bG^j)}$ be the data associated to the following system:
    \begin{equation}
        \left\{\begin{array}{>{\displaystyle}lc}
            -\partial_t u_i^j(x,t)-\Delta u_i^j(x,t)+\frac{1}{2}|\nabla u_i^j(x,t)|^2=F^j(x,\bm^j(x,t)), &(x,t)\in\bbT^d\times(0,T),\\
            \partial_t m_i^j(x,t)-\Delta m_i^j(x,t)-{\rm div}(m_i^j(x,t)\nabla u_i^j)=0, &(x,t)\in\bbT^d\times(0,T)\\
            u_i^j(x,T)=G^j(x,\bm(x,T)),\ m_i^j(x,0)=m_{i,0}(x), &x\in \bbT^d.
        \end{array}\right.
    \end{equation}
    If there exists $\delta>0$, such that for any $\bm_0\in B_\delta\left([C^{2+\alpha}(\bbT^d)]^n\right)$, one has
    \[
        \wt \cM_{(\bF^1,\bG^1)}=\wt\cM_{(\bF^2,\bG^2)},
    \]
    then it holds that
    \[
        (\bF^1(x,\bz), \bG^1(x,\bz))=(\bF^2(x,\bz), \bG^2(x,\bz))\ \text{in}\ \bbT^d\times\bbR^n.
    \]
\end{theorem}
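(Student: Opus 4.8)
The plan is to argue exactly along the lines of the proof of Theorem~\ref{THM:full recon}, reducing the statement to the claim that every Taylor coefficient of the (now common) cost functions coincides: $F^{1(\bbeta)}(x)=F^{2(\bbeta)}(x)$ and $G^{1(\bbeta)}(x)=G^{2(\bbeta)}(x)$ for all $\bbeta\in\bbN^n$ with $|\bbeta|\ge 1$; the conclusion then follows from the admissibility of $\bF^j,\bG^j$ via the power series representation~\eqref{EQ:power expansion}. Since $\wt\cM_{(\bF^j,\bG^j)}$ encodes $u_i^j(x,0)$ for all $\bm_0$ in a ball and $\bm_0\mapsto(\bu,\bm)$ is holomorphic by Theorem~\ref{THM:holomorphic}, equality of the data maps gives $u_i^{1(\ell_1,\dots,\ell_M)}(x,0)=u_i^{2(\ell_1,\dots,\ell_M)}(x,0)$ at every order of the multilinearization. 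The structural point that makes a single population's data suffice here is that in class $\cC_1$ the coefficients entering the linearized equations for $u_i$---for instance $F_i^{(\be_k)}=F^{(\be_k)}$ in~\eqref{EQ:MFG Lin Order 1}---do not depend on $i$, and that the initial perturbations $f_\ell^k$ of each population may be prescribed independently, so one can ``switch on'' only the population(s) relevant to the coefficient being probed.

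Concretely, for the first order fix $k_0\in\cI$ and choose the perturbation so that $m_k^{(1)}\equiv 0$ for $k\ne k_0$ while $m_{k_0}^{(1)}$ is an arbitrary nonnegative solution of the heat equation (adding a large constant to $f_1^{k_0}$ as in Theorem~\ref{THM:full recon} to respect the sign constraint). With $\overline u_i^{(1)}=u_i^{1(1)}-u_i^{2(1)}$, and using $\bm^{1(1)}=\bm^{2(1)}$ together with $\overline u_i^{(1)}(x,0)=0$ from the data, Lemma~\ref{LEMMA:Orthogonal relation} yields
\[
\int_Q(F^{1(\be_{k_0})}-F^{2(\be_{k_0})})\,m_{k_0}^{(1)}\,w\,dxdt=\int_{\bbT^d}(G^{1(\be_{k_0})}-G^{2(\be_{k_0})})\,m_{k_0}^{(1)}\,w(x,T)\,dx
\]
for all heat solutions $w$ and $m_{k_0}^{(1)}$, which is precisely identity~\eqref{EQ:First order orthogonal} from the single-population analysis. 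The same Fourier-mode argument---testing with plane-wave solutions and using that a frequency $\xi$ admits decompositions $\xi=\xi_1+\xi_2$ with $|\xi_1|^2+|\xi_2|^2$ taking distinct values---gives $F^{1(\be_{k_0})}=F^{2(\be_{k_0})}$ and $G^{1(\be_{k_0})}=G^{2(\be_{k_0})}$. Letting $k_0$ range over $\cI$ recovers all first-order coefficients, and Lemma~\ref{LEMMA:well-posedness} then forces $\bu^{1(1)}=\bu^{2(1)}$ and $\bm^{1(1)}=\bm^{2(1)}$.

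The higher-order steps are carried out by induction exactly as in Steps~II--III of Theorem~\ref{THM:full recon}. Assuming all coefficients of order $<s$ agree---so that all lower-order linearized quantities coincide, the term $\sum_k G^{(\be_k)}m_k^{(1,\dots,s)}$ cancels (its coefficients were matched at order one), and all lower-order terms cancel---the difference $\overline u_i^{(1,\dots,s)}$ solves a backward heat equation whose source is $\sum_{k_1,\dots,k_s}(F^{1(\be_{k_1}+\cdots+\be_{k_s})}-F^{2(\be_{k_1}+\cdots+\be_{k_s})})\,m_{k_1}^{(1)}\cdots m_{k_s}^{(s)}$, with vanishing initial data and terminal data built from the matching $G$-coefficients. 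To reach a prescribed $\bbeta$ with $|\bbeta|=s$, write $\bbeta=\be_{j_1}+\cdots+\be_{j_s}$ and choose the perturbations so that $m_k^{(\ell)}\equiv 0$ unless $k=j_\ell$; then only the single term with coefficient $F^{(\bbeta)}$ survives, and Lemma~\ref{LEMMA:Orthogonal relation} together with the Fourier argument of Steps~I--II gives $F^{1(\bbeta)}=F^{2(\bbeta)}$ and $G^{1(\bbeta)}=G^{2(\bbeta)}$. This closes the induction and completes the proof.

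I do not expect a serious analytical obstacle beyond what is already handled in Theorem~\ref{THM:full recon}. The two points needing care are: confirming that in class $\cC_1$ the coefficient extracted from observing $u_i$ really is the common coefficient $F^{(\bbeta)}$ (so the choice of $i$ is immaterial), and checking that the single-population probing---turning on only the perturbations of the relevant populations at each order while keeping every $f_\ell^k\ge 0$---is admissible. If anything is delicate, it is the bookkeeping of the cancellation of lower-order terms in the induction step, which is inherited essentially verbatim from the single-population treatment.
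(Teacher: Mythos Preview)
Your proposal is correct and follows essentially the same approach as the paper: both exploit the fact that in class $\cC_1$ the linearized HJB equations are identical across populations (so data from one population suffices), then apply Lemma~\ref{LEMMA:Orthogonal relation} and the Fourier-mode argument of Theorem~\ref{THM:full recon}, and close by induction on the order of the Taylor coefficients. The only cosmetic difference is that the paper phrases the key observation as ``the value function is independent of the population,'' whereas you phrase it as the coefficients $F_i^{(\be_k)}=F^{(\be_k)}$ being $i$-independent; these are equivalent.
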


\begin{proof}
    The proof of this theorem follows the same framework as \Cref{THM:full recon}, so we will only outline the main steps and leave out most of the details in the proof.
    
    In this special case, the first-order linearization leads to the following system:
    \begin{equation}
        \left\{\begin{array}{>{\displaystyle}lc}
            -\partial_t u_i^{j(1)}-\Delta u_i^{j(1)}=\sum_{k=1}^n F^{j(k)}(x) m_k^{j(1)}, &(x,t)\in\bbT^d\times(0,T),\\
            \partial_t m_i^{j(1)}-\Delta m_i^{j(1)}=0, &(x,t)\in\bbT^d\times(0,T),\\
            u_i^{j(1)}(x,T)=\sum_{k=1}^n G^{j(k)}(x) m_k^{j(1)}(x,T),\ m_i^{j(1)}(x,0)=f_1^i(x), &x\in\bbT^d.
        \end{array}\right.
    \end{equation}
    The key observation here is that due to the fact that the cost functions are the same for all the populations, the right-hand side of the first equation, as well as the final condition in the third equation, of the system is the same for all populations. Therefore, the value function is also independent of the population. Therefore, we need only data from a single population in $\wt \cM$ defined in~\eqref{EQ:Data 2}. Without loss of generality, we assume the measurement is on the first population. We have the following orthogonality relation from the equation of $u_1^{j(1)}$ (which is a simplification of the orthogonality relation~\eqref{EQ:First order orthogonal}):
    \begin{equation}
        \int_Q (F^{1(k)}-F^{2(k)}) \sum_{k=1}^n m_k^{(1)}w(x,t) dxdt=\int_{\bbT^d} (G^{1(k)}-G^{2(k)}) \sum_{k=1}^n m_k^{(1)}w(x,T)dx,
    \end{equation}
    where $w(x,t)$ and $m_k^{(1)}(x,t)$ are solutions of the heat equation $\partial_t v(x,t)-\Delta v(x,t)=0$ in $Q$. The same argument in Step I of the proof for Theorem~\ref{THM:full recon} then gives us
    \begin{equation}
        F^{1(k)}(x)=F^{2(k)}(x),\ G^{1(k)}(x)=G^{2(k)}(x),\ k=1,2,\dots,n\,.
    \end{equation}
    This in turn implies that $\bu^{1(1)}(x,t)=\bu^{2(1)}(x,t)$.

    Then, by the same induction process established in Step II and III in the proof of \Cref{THM:full recon}, we can uniquely determine every Taylor coefficient of $F$ and $G$, and the proof is completed.
\end{proof}

\begin{remark}
    We emphasize again that ~\Cref{THM:part recon} is not the generalized case of \Cref{THM:full recon}, since we require all populations to share the same running cost $F$ and terminal cost $G$. In the general cases where different populations have different cost functions $F_i$ and $G_i$, data measured only on a given population, say, population $1$, are not enough to reconstruct the cost functions of that population. This is evidently seen from the fact that, in this case, the orthogonality relation~\eqref{EQ:First order orthogonal} can only be written for population $1$. One therefore can not reconstruct first-order Taylor coefficients for all populations as in~\eqref{EQ:All i}. Therefore, one can not conclude that $\bu^{1(1)}(x,t)=\bu^{2(1)}(x,t)$. The rest of the proof of~\Cref{THM:full recon} can not proceed as it was done.
\end{remark}

\subsection{State-independent cost functions}

The second special case of reconstruction from single-population data is the case where the cost functions $F_i$ are independent of the state variable $x$ as in~\eqref{EQ:Class 2}, so that the coefficients in the power series representation~\eqref{EQ:power expansion} are now constants, that is,
\[
    F_i(\bm)=\sum_{\balpha\in\bbN^n,|\balpha|\ge 1} F_i^{(\balpha)}\frac{\bm^\balpha}{\balpha!}\,,
\]
with $F_i^{(\balpha)}\in\bbR$. As in the proof of Theorem \ref{THM:full recon}, we will sometimes abuse notation and write $F_i^{(k)}$ to mean $F_i^{(\be_k)}$. 
    Such models of the cost functions can be viewed as simplified versions of the separable cost functions that can be written as a summation of a function of $x$ and a function of the population density; see, for instance, ~\cite{BaFe-NoDEA16,BaPr-SIAM14,GoPiSa-ESAIM16,LaWo-TRB-2011,PiVo-IUMJ17} for examples of such cost function models.

For simplicity, we assume that $G_i=0$. Then the system \eqref{EQ:MFG MultiPop} becomes:
\begin{equation}\label{EQ:MFG MultiPop simpler}
    \left\{\begin{array}{>{\displaystyle}lc}
        -\partial_t u_i(x,t)-\Delta u_i(x,t)+\frac{1}{2}|\nabla u_i(x,t)|^2=F_i(\bm(x,t)), &(x,t)\in\bbT^d\times(0,T),\\
        \partial_t m_i(x,t)-\Delta m_i(x,t)-{\rm div}(m_i(x,t)\nabla u_i)=0, &(x,t)\in\bbT^d\times(0,T),\\
        u_i(x,T)=0,\ m_i(x,0)=m_{i,0}(x), &x\in \bbT^d,
    \end{array}\right.
\end{equation}
and in the following, we change the notation of partial measurements $\wt\cM_{(\bF,\bG)}$ to $\wt\cM_{\bF}$.

In general cases, we are not able to reconstruct information in other populations from measurements only on a subset of the populations, even in this simplified setting. Consider $F_i(\bm)=F_i(m_i)$. In this case, it is only possible to reconstruct one component of $\bF(\bm)$ from the partial measurement $\wt\cM_{\bF}$. However, our following theorem shows we are able to reconstruct the whole $\bF(\bm)$ from $\wt\cM_{\bF}$ if some mild additional assumptions on the coupling mechanism are satisfied.

\begin{theorem}\label{THM:state-indep recon}
    Let $\wt\cM_{\bF^j}$ ($j=1,2$) be the single-population measurement map associated with system \eqref{EQ:MFG MultiPop simpler} with $\bF^j\in\cA^n\cap \cC_2$. If there exists $\delta>0$, such that for any $\bm_0\in B_\delta\left([C^{2+\alpha}(\bbT^d)]^n\right)$, one has
    \[
        \wt\cM_{\bF^1}(\bm_0)=\wt\cM_{\bF^2}(\bm_0),
    \]
    then it holds that
    \[
        F_1^1(\bz)=F_1^2(\bz)\ \text{in}\ \bbR^n.
    \]
    Furthermore, if $F_1^{(k)}\neq 0$, $k=1,2,\dots,n$, then we have
    \[
        \bF^1(\bz)=\bF^2(\bz)\ \text{in}\ \bbR^n.
    \]
\end{theorem}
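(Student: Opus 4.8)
The plan splits along the two assertions. \textbf{First assertion} ($F_1^1=F_1^2$). I would rerun the multilinearization of \Cref{THM:full recon}, using crucially that for $(\bF,\bG)\in\cC_2$ the Taylor coefficients $F_i^{(\balpha)}$ are constants and $G_i\equiv0$. Take all first-order perturbation profiles constant, $m_{i,0}(x;\varepsilon)=\sum_\ell\varepsilon_\ell c_i^{(\ell)}$ with $c_i^{(\ell)}\ge0$. A short induction shows that every linearized density $m_k^{(\ell)}$ stays constant (heat equation with constant data), that every linearized value function $u_i^{(\ell)},u_i^{(\ell_1,\ell_2)},\dots$ is affine in $t$ and independent of $x$, so its spatial gradients vanish, and hence that every higher linearized density $m_i^{(\ell_1,\dots,\ell_s)}$ with $s\ge2$ vanishes identically. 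The $s$-th order linearization of the $u_1$-equation then collapses to $-\partial_t u_1^{(1,\dots,s)}-\Delta u_1^{(1,\dots,s)}=\sum_{k_1,\dots,k_s}F_1^{(k_1,\dots,k_s)}c_{k_1}^{(1)}\cdots c_{k_s}^{(s)}$ with zero terminal datum; since the data force $\overline{u_1^{(1,\dots,s)}}(\cdot,0)=0$ (equivalently, \Cref{LEMMA:Orthogonal relation} with $w\equiv1$), this constant source must vanish. Letting the $c^{(\ell)}$ range over the positive orthant and using the symmetry of the Taylor coefficients gives $F_1^{1(\balpha)}=F_1^{2(\balpha)}$ for all $\balpha$, hence $F_1^1=F_1^2$ on $\bbR^n$ by admissibility. (Equivalently and more directly: for $\bm_0$ equal to a small constant vector $(c_1,\dots,c_n)$, the spatially homogeneous pair $u_i=F_i(\bm_0)(T-t)$, $m_i\equiv c_i$ is the unique small solution of \eqref{EQ:MFG MultiPop simpler} by \Cref{THM:holomorphic}, so $u_1(\cdot,0)=F_1(\bm_0)\,T$; equality of the data forces $F_1^1=F_1^2$ near $\bzero$, hence on all of $\bbR^n$ by holomorphy.)

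\textbf{Second assertion.} Write $F_1^1=F_1^2=:F_1$ and assume $F_1^{(k)}\ne0$ for every $k$. I would prove $F_i^{1(\balpha)}=F_i^{2(\balpha)}$ for all $i$ and all $\balpha$ by induction on $|\balpha|$, using multilinearizations in which several populations are perturbed (a large additive constant keeps the profiles nonnegative, exactly as in \Cref{THM:full recon}). The key point is that, once all coefficients of $F_1$ and all $F_i$-coefficients of degree $<|\balpha|$ are known, the cascade of linearized FPK/HJB systems forces $\overline{u_k^{(\cdot)}}=0$ and $\overline{m_k^{(\cdot)}}=0$ at every lower order and for \emph{all} $k$ (including $k=1$, since $F_1$ is fully known); the difference of the $u_1$-equations at the next order then reduces to $-\partial_t\overline{u_1^{(\cdot)}}-\Delta\overline{u_1^{(\cdot)}}=\sum_{k\ne1}F_1^{(k)}\overline{m_k^{(\cdot)}}$ (the $k=1$ term drops because $\overline{m_1^{(\cdot)}}$ vanishes at this order too) with zero terminal and, by the data, zero initial value, so \Cref{LEMMA:Orthogonal relation} yields
\[
\sum_{k\ne1}F_1^{(k)}\int_Q\overline{m_k^{(\cdot)}}\,w\,dxdt=0
\]
for every $w$ solving the heat equation. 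Each $\overline{m_k^{(\cdot)}}$ with $k\ne1$ is an explicit, perturbation-dependent linear combination of the unknown differences $F_k^{1(\balpha')}-F_k^{2(\balpha')}$ with $|\balpha'|=|\balpha|$ (through $\overline{u_k^{(\cdot)}}$, whose source is controlled by those differences), so the hypothesis $F_1^{(k)}\ne0$ lets me divide out $F_1^{(k)}$ and obtain one scalar linear relation among the degree-$|\balpha|$ coefficient differences of $F_2,\dots,F_n$ for each choice of perturbation and each test function $w$.

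It remains to show that these relations, ranged over all admissible profiles and all $w$, force every such difference to vanish. If only a single unknown population $k_0$ is excited (possibly together with population $1$, whose contribution drops out because $\overline{m_1^{(\cdot)}}=0$), the relation isolates a single product $(F_{k_0}^{1(\balpha')}-F_{k_0}^{2(\balpha')})\int_Q\Phi\,w\,dxdt$ with $\Phi$ explicitly computable from the profiles, and, exactly as in \Cref{THM:full recon}, one picks the profiles and $w$ (exponentials plus a large constant) so that this integral is nonzero; this settles every $\balpha'$ whose support involves at most one population besides $1$. \emph{The hard part} is decoupling two unknown populations $i,j\ne1$: when both are excited, the scalar relation sees only the combination $F_1^{(i)}(F_i^{1(\balpha_i)}-F_i^{2(\balpha_i)})\int_Q\Phi_i w\,dxdt+F_1^{(j)}(F_j^{1(\balpha_j)}-F_j^{2(\balpha_j)})\int_Q\Phi_j w\,dxdt=0$. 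I expect to handle this by exploiting the freedom in $w$ together with assigning \emph{distinct nonzero Fourier modes} to the first-order profiles of the two perturbation directions: then $\int_0^T e^{t\Delta}\Phi_i(\cdot,t)\,dt$ and $\int_0^T e^{t\Delta}\Phi_j(\cdot,t)\,dt$ live on different sets of modes, so testing against $w$ concentrated on a mode seen by only one of them kills the corresponding coefficient difference, and interchanging the roles of $i$ and $j$ kills the other. Iterating over all populations, degrees, and supports completes the induction and gives $\bF^1=\bF^2$ on $\bbR^n$. The essential obstacle throughout is that single-population data produces only one scalar constraint per configuration; the hypothesis $F_1^{(k)}\ne0$ — which makes $u_1$ sense every population already at the first linearization order — and the Fourier-mode separation device are exactly what make the decoupling possible.
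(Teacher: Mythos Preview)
Your argument for the first assertion is correct and, in its parenthetical form (constant initial data $\bm_0=(c_1,\dots,c_n)$ gives $u_1(\cdot,0)=T\,F_1(\bm_0)$, hence $F_1$ is determined near $\bzero$ and then everywhere by holomorphy), is exactly the paper's proof.

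For the second assertion your overall architecture is right: there is a one-order lag, so the degree-$s$ coefficients of $F_k$ for $k\neq 1$ are reached only through the cascade $\overline{F_k^{(|\balpha|=s)}}\to \overline{u_k^{(\text{order }s)}}\to \overline{m_k^{(\text{order }s+1)}}\to \overline{u_1^{(\text{order }s+1)}}$, and the data give one scalar constraint per configuration via \Cref{LEMMA:Orthogonal relation}. (Your bookkeeping needs care here: it is \emph{not} true that $\overline{u_k^{(\cdot)}}=0$ at ``every lower order''; at order $s$ this difference carries exactly the unknown $\overline{F_k^{(|\balpha|=s)}}$ you are after, and this is what feeds $\overline{m_k}$ at order $s+1$.)

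The genuine gap is the decoupling step. Your proposed mechanism --- give different perturbation directions distinct Fourier modes and separate the contributions by testing against $w$ --- does not work as stated. Once the cascade runs, every term that reaches $\overline{u_1}$ at order $s+1$ carries the \emph{same} spatial mode $\xi_1+\cdots+\xi_{s+1}$: the FPK source $\mathrm{div}(m_k^{(\ell)}\nabla\overline{u_k^{(1,\dots,s+1,\ell')}})$ has mode $\xi_\ell+\sum_{\ell'\neq\ell}\xi_{\ell'}$, independent of $\ell$ and of $k$. So spatial testing cannot separate the summands, and the time profiles do not obviously give enough independent relations either; the claim ``$\Phi_i$ and $\Phi_j$ live on different sets of modes'' is therefore not justified.

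The paper's route is different and supplies the missing idea. It keeps only \emph{one} spatially varying direction (mode $\xi$ in direction~1) and makes all other directions \emph{constant}: $f_\ell^{r_\ell}=1$ for $\ell\ge 2$. This kills $\nabla u_i^{(\ell)}$ for $\ell\ge 2$, so the FPK source at order $s$ collapses to $\sum_{\ell\ge 2}\delta_{ir_\ell}\Delta\overline{u_i^{(1,\dots,s,\ell')}}$, and after an explicit computation the $\overline{u_1}$-relation becomes
\[
\sum_{\ell=2}^{s} F_1^{(r_\ell)}\,\overline{F}_{r_\ell}^{\,(r_1,\dots,r_s,r_\ell')}=0.
\]
The key trick to decouple this sum is \emph{not} Fourier separation but symmetrization: cyclically permute $(r_1,\dots,r_s)$ to obtain $s$ such relations, add them, and use that the full cyclic sum $\sum_{\ell=1}^s F_1^{(r_\ell)}\overline{F}_{r_\ell}^{(r_1,\dots,r_s,r_\ell')}$ is invariant under the shift. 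This yields $(s-1)$ times the full sum equals zero, hence each individual term vanishes, and $F_1^{(r_k)}\neq 0$ then forces $\overline{F}_{r_k}^{(r_1,\dots,r_s,r_k')}=0$ for all choices of $r_1,\dots,r_s$. This symmetrization argument is the missing ingredient in your plan.
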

\begin{proof}
    Without loss of generality, we assume that the data we measured is on the population $1$ (that is, $i=1$ in the definition~\eqref{EQ:Data 2}). We begin with the reconstruction of $F_1$. Notice that if we take $m_{i,0}(x)=c_i$, then the solution of system \eqref{EQ:MFG MultiPop simpler} is
    \[
        (u_i(x,t),m_i(x,t))=(u_i(t),c_i),
    \]
    where
    \[
        u_i(t)=(T-t)F_i(\bc).
    \]
    Therefore the data $u_1(x,0)$ uniquely determines $F_1(\bc)$ for all $\|\bc\|=\sum_{i=1}^n|c_i|\le\delta$. Then by the assumption that $F_1$ is holomorphic, we have $F_1^1(\bz)=F_1^2(\bz)$, $\bz\in\bbR^n$, and equivalently $F_1^{1(\balpha)}=F_1^{2(\balpha)}$ for all $\balpha\in\bbN^d$.

    Now we assume $F_1^{(k)}\neq 0$, $k=1,2,\dots,n$. We take the following notations: $\phi_\xi(x)=e^{2\pi i\xi\cdot x}$, $\psi_\lambda(t)=e^{4\pi^2\lambda t}$, where $\xi\in\bbZ^d$, $\lambda\in\bbR$. Now, we implement detailed calculations for the first and second-order linearizations. The first-order linearization leads to the following system (for simplicity, we will sometimes omit the index $j=1,2$):
    \begin{equation*}
        \left\{\begin{array}{>{\displaystyle}lc}
            -\partial_t u_i^{(1)}-\Delta u_i^{(1)}=\sum_{k=1}^n F_i^{(k)} m_k^{(1)}, &(x,t)\in\bbT^d\times(0,T),\\
            \partial_t m_i^{(1)}-\Delta m_i^{(1)}=0, &(x,t)\in\bbT^d\times(0,T),\\
            u_i^{(1)}(x,T)=0,\ m_i^{(1)}(x,0)=f_1^i(x), &x\in\bbT^d.
        \end{array}\right.
    \end{equation*}
    Take $f_1^i(x)=c_i\phi_{\xi}(x)$, then the solution of the second equation is
    \begin{equation*}
        m_i^{(1)}(x,t)=c_i\psi_{-|\xi|^2}(t)\phi_{\xi}(x)=c_i e^{-4\pi^2|\xi|^2 t+2\pi i\xi\cdot x}.
    \end{equation*}
    The solution of the first equation is therefore
    \begin{equation*}
        u_i^{(1)}(x,t)=-\sum_{k=1}^n \frac{F_i^{(k)}c_k}{4\pi^2|\xi|^2}H^1_\xi(t)\phi_{\xi}(x),
    \end{equation*}
    where
    \begin{equation*}
        H^1_\xi(t):=\frac{1}{2}\left(-\psi_{-|\xi|^2}(t)+e^{-4\pi^2|\xi|^2 \cdot 2T}\psi_{|\xi|^2}(t)\right).
    \end{equation*}

    Then we consider the second-order linearization:
    \begin{equation*}
        \left\{\begin{array}{>{\displaystyle}lc}
            -\partial_t u_i^{(1,2)}-\Delta u_i^{(1,2)}+\nabla u_i^{(1)}\cdot\nabla u_i^{(2)}\\
            \qquad=\sum_{k_1=1}^n F_i^{(k_1)} m_{k_1}^{(1,2)}+\sum_{k_1=1}^n \sum_{k_2=1}^n F_i^{(k_1,k_2)}m_{k_1}^{(1)}m_{k_2}^{(2)}, &(x,t)\in\bbT^d\times(0,T),\\
            \partial_t m_i^{(1,2)}-\Delta m_i^{(1,2)}={\rm div}(m_i^{(1)}\nabla u_i^{(2)}+m_i^{(2)}\nabla u_i^{(1)}), &(x,t)\in\bbT^d\times(0,T),\\
            u_i^{(1,2)}(x,T)=0,\ m_i^{(1,2)}(x,0)=0, &x\in\bbT^d.
        \end{array}\right.
    \end{equation*}
    Take $f_2^{r_2}(x)=1$ for a fixed $r_2\in\{1,2,\dots,n\}$, and $f_2^k(x)=0$ for $k\neq r_2$, then we have $m_i^{(2)}(x,t)=\delta_{ir_2}$, and $\nabla u_i^{(2)}=0$. Therefore, the above system becomes
    \begin{equation}\label{EQ:Second order, x-independent}
        \left\{\begin{array}{>{\displaystyle}lc}
            -\partial_t u_i^{(1,2)}-\Delta u_i^{(1,2)}=\sum_{k_1=1}^n F_i^{(k_1)} m_{k_1}^{(1,2)}+\sum_{k_1=1}^n F_i^{(k_1,r_2)}m_{k_1}^{(1)}, &(x,t)\in\bbT^d\times(0,T),\\
            \partial_t m_i^{(1,2)}-\Delta m_i^{(1,2)}=\delta_{ir_2}\Delta u_i^{(1)}, &(x,t)\in\bbT^d\times(0,T),\\
            u_i^{(1,2)}(x,T)=0,\ m_i^{(1,2)}(x,0)=0, &x\in\bbT^d.
        \end{array}\right.
    \end{equation}

    By direct calculation, we have
    \begin{equation*}
        \Delta u_i^{(1)}(x,t)=\sum_{k=1}^n (F_i^{(k)}c_k) H^1_\xi(t)\phi_{\xi}(x).
    \end{equation*}
    Hence for the second equation, we have $m_i^{(1,2)}(x,t)\equiv 0$, $i\neq r_2$, and
    \begin{equation*}
        m_{r_2}^{(1,2)}(x,t)=\sum_{k=1}^n F_{r_2}^{(k)}c_k H^2_\xi(t)\phi_{\xi}(x),
    \end{equation*}
    where
    \begin{equation*}
        H^2_\xi(t):=\frac{1}{2}\left[-(t-T)\psi_{-|\xi|^2}(t)+\frac{e^{-4\pi^2|\xi|^2 \cdot 2T}}{8\pi^2|\xi|^2}\left(\psi_{|\xi|^2}(t)-e^{4\pi^2|\xi|^2 \cdot 2T}\psi_{-|\xi|^2}(t)\right)\right].
    \end{equation*}

    Now we only take one $c_{r_1}=1$, and let $c_k=0$ for $k\neq r_1$. Then we finally get
    \begin{equation*}
        -\partial_t u_i^{(1,2)}-\Delta u_i^{(1,2)}=F_i^{(r_2)}F_{r_2}^{(r_1)}H^2_\xi(t)\phi_{\xi}(x)+F_i^{(r_1,r_2)}\psi_{-|\xi|^2}(t)\phi_{\xi}(x)
    \end{equation*}
    for all $\xi\in\bbZ^d$. Take $i=1$, and let $\overline{u}_1^{(1,2)}=u_1^{1(1,2)}-u_1^{2(1,2)}$. Since all $F_1^{(\balpha)}$ are already uniquely determined, $\overline{u}_1^{(1,2)}$ satisfies the following system:
    \begin{equation*}
        \left\{\begin{array}{>{\displaystyle}lc}
            -\partial_t \overline{u}_1^{(1,2)}-\Delta \overline{u}_1^{(1,2)}=F_1^{(r_2)}(F_{r_2}^{1(r_1)}-F_{r_2}^{2(r_1)})H^2_\xi(t)\phi_{\xi}(x), &(x,t)\in\bbT^d\times(0,T),\\
            \overline{u}_1^{(1,2)}(x,T)=\overline{u}_1^{(1,2)}(x,0) = 0, &x\in\bbT^d,
        \end{array}\right.
    \end{equation*}
    Take $w(x,t):=\psi_{-|\xi|^2}(t)\phi_{-\xi}(x)$, which is a solution of the adjoint equation $\partial_t w(x,t)-\Delta w(x,t)=0$. Then by \Cref{LEMMA:Orthogonal relation}, we have
    \begin{equation*}
        F_1^{(r_2)}(F_{r_2}^{1(r_1)}-F_{r_2}^{2(r_1)})\int_0^T H^2_\xi(t)\psi_{-|\xi|^2}(t)dt=0.
    \end{equation*}
    Denote $a=8\pi^2|\xi|^2$, then
    \begin{equation*}
        \begin{aligned}
            \int_0^T H^2_\xi(t)\psi_{-|\xi|^2}(t)dt&=\frac{1}{2}\int_0^T \left[(T-t)e^{-at}+\frac{e^{-aT}-e^{-at}}{a}\right] dt\\
            &=\frac{e^{-aT}(aT+e^{aT}(aT-2)+2)}{a^2},
        \end{aligned}
    \end{equation*}
    therefore $F_1^{(r_2)}\int_0^T H^2_\xi(t)\psi_{-|\xi|^2}(t)dt\neq 0$ when $a$ is large, which implies $F_{r_2}^{1(r_1)}=F_{r_2}^{2(r_1)}$. Therefore we can uniquely determine $F_{r_2}^{(r_1)}$ for all $r_1,r_2\in\{1,2,\dots,n\}$, which includes all of the first order Taylor coefficients for the other populations.

    The higher-order linearizations are similar, and we complete the proof by induction. For the $s$-th ($s\ge 2$) order linearization, we take $f_1^{r_1}=\phi_\xi(x)$, $f_\ell^{r_\ell}=1$, $\ell=2,\dots,s$ for fixed $r_1,\dots,r_s\in\{1,2,\dots,n\}$, and take all other initial values $f_\ell^i=0$.

    The induction assumptions are as follows:

    \begin{enumerate}
        \item \label{ASS:F} All coefficients $F_i^{(\balpha)}$ with $|\balpha|\le s-2$, $i=1,2,\dots,n$ are uniquely determined.

        \item \label{ASS:m} $m_i^{(\balpha)}$ is uniquely determined for all $i=1,2\dots,n$ and $\balpha$ with $|\balpha|\le s-1$ and $\alpha_k \le 1$ for all $k=1,\dots, d$.

        \item \label{ASS:u} $u_i^{(\balpha)}$ is uniquely determined for all $i=1,2\dots,n$ and $\balpha$ with $|\balpha|\le s-2$ and $\alpha_k \le 1$ for all $k=1,\dots, d$.

        \item \label{ASS:Grad u}$\nabla u_i^{(\balpha)}$ is uniquely determined for all $i=1,2\dots,n$ and $\balpha$ with $|\balpha|\le s-1$, $\alpha_k\le 1$ for all $k=1,\dots,d$, and $\alpha_1 = 0$.

        \item \label{ASS:Laplacian u} $\Delta \overline{u}_i^{(1,\dots,s-1)}=\overline{F}_i^{(r_1,\dots,r_{s-1})}H^1_\xi(t)\phi_\xi(x)$. 
    \end{enumerate}

    As usual, we put a bar over a quantity to denote the difference between that quantity for $j=1$ and $j=2$. For example, in assumption \ref{ASS:Laplacian u}, $\overline{u}_i^{(1,\dots,s-1)}$ means $u_i^{1(1,\dots,s-1)} - \overline{u}_i^{2(1,\dots,s-1)}$.
    
    These five assumptions can be seen to hold in the base case $s=2$ from the preceding discussion in the first and second-order linearization steps.
    
    Now assume that the inductive assumptions hold for a given $s\ge 2$. To show that they hold when $s$ is replaced with $s+1$, we consider the $s$-th order linearization of the MFG system \eqref{EQ:MFG MultiPop simpler}. Similar to the derivation of system \eqref{EQ:Second order, x-independent}, we have the following system by the inductive assumptions:
    \begin{equation}\label{EQ:s order, x-independent}
        \left\{\begin{array}{>{\displaystyle}lc}
            -\partial_t \overline{u}_i^{(1,\dots,s)}-\Delta \overline{u}_i^{(1,\dots,s)}\\
            \qquad=\sum_{k_1=1}^n F_i^{(k_1)}\overline{m}_{k_1}^{(1,\dots,s)}+\overline{F}_i^{(r_1,\dots,r_s)}m_{r_1}^{(1)}\cdots m_{r_s}^{(s)}\\
            \qquad\ + \mathbbm{1}_{s > 2} \sum_{k_1=1}^n\cdots\sum_{k_{s-1}=1}^n \overline{F}_i^{(k_1,\dots,k_{s-1})} M_s, &(x,t)\in\bbT^d\times(0,T),\\
            \partial_t \overline{m}_i^{(1,\dots,s)}-\Delta \overline{m}_i^{(1,\dots,s)}=\sum_{\ell=1}^s {\rm div}(m_i^{(\ell)}\nabla \overline{u}_i^{(1,\dots,s,\ell')}) = \sum_{\ell=2}^s \delta_{ir_\ell}\Delta \overline{u}_i^{(1,\dots,s,\ell')}, &(x,t)\in\bbT^d\times(0,T),\\
            \overline{u}_i^{(1,\dots,s)}(x,T)=0,\ \overline{m}_i^{(1,\dots,s)}(x,0)=0, &x\in\bbT^d.
        \end{array}\right.
    \end{equation}
    Here, $M_s$ is a sum of products of $m$-terms, each with order at most $2$ (thus uniquely determined by assumption \ref{ASS:m}). The notation $\mathbbm{1}_{s > 2}$ in front of the last term of the first equation indicates that the term should be omitted in the case $s=2$. We use the notation $(1,\dots,s,\ell')$ to denote $(1,\dots,\ell-1,\ell+1,\dots,s)$, and we will use the notation $(r_1,\dots,r_s,r_\ell')$ in the same way. Note that the $s$-th order linearization of the Hamiltonian term $\frac12 |\nabla u_i|^2$ is uniquely determined on account of assumption \ref{ASS:Grad u} and the fact that $\nabla u_i^{(\ell)} = 0$ for $\ell\neq 1$, so it does not appear in the first equation of \eqref{EQ:s order, x-independent}.

    By assumption \ref{ASS:Laplacian u}, we have
    \begin{equation*}
        \partial_t \overline{m}_i^{(1,\dots,s)}-\Delta \overline{m}_i^{(1,\dots,s)} = \sum_{\ell=2}^s \delta_{ir_\ell}\Delta \overline{u}_i^{(1,\dots,s,\ell')} = \sum_{\ell=2}^s \delta_{ir_\ell}\overline{F}_i^{(r_1,\dots,r_s,r_\ell')}H^1_\xi(t)\phi_\xi(x). 
    \end{equation*}
    Combined with the initial condition $\overline{m}_i^{(1,\dots,s)}(x,0)=0$, the definition of $H_\xi^2(t)$ in the second order linearization step shows that the solution to this initial value problem is 
    \begin{equation}\label{EQ:s order m}
        \overline{m}_i^{(1,\dots,s)} = \sum_{\ell=2}^s \delta_{ir_\ell}\overline{F}_i^{(r_1,\dots,r_s,r_\ell')}H^2_\xi(t)\phi_\xi(x).
    \end{equation}
    Let us plug this into the above equation \eqref{EQ:s order, x-independent} for $\overline{u}_i^{(1,\dots,s)}$. Also, let us set $i=1$ so that the last two terms on the right-hand side vanish (since $F_1(\bm)$ is uniquely determined). We obtain  
    \begin{align*}
        -\partial_t \overline{u}_1^{(1,\dots,s)}-\Delta \overline{u}_1^{(1,\dots,s)} &= \sum_{k_1=1}^n \sum_{\ell=2}^s F_1^{(k_1)}\delta_{k_1 r_\ell}\overline{F}_{k_1}^{(r_1,\dots,r_s,r_\ell')}H^2_\xi(t)\phi_\xi(x)\\
        &= \sum_{\ell=2}^s F_1^{(r_\ell)}\overline{F}_{r_\ell}^{(r_1,\dots,r_s,r_\ell')}H^2_\xi(t)\phi_\xi(x).
    \end{align*}
    
    We also have the terminal and initial conditions $\overline{u}_1^{(1,\dots,s)}(x,T) = \overline{u}_1^{(1,\dots,s)}(x,0)=0$ (the latter of which comes from the data). Then as in the second order linearization step, we can introduce $w(x,t):=\psi_{-|\xi|^2}(t)\phi_{-\xi}(x)$, which is a solution of the adjoint equation $\partial_t w(x,t)-\Delta w(x,t)=0$, and use the same argument as before to get that 
    \begin{equation*}
        0 = \sum_{\ell=2}^s F_1^{(r_\ell)}\overline{F}_{r_\ell}^{(r_1,\dots,r_s,r_\ell')} = \sum_{\ell=1}^s F_1^{(r_\ell)}\overline{F}_{r_\ell}^{(r_1,\dots,r_s,r_\ell')} - F_1^{(r_1)}\overline{F}_{r_1}^{(r_1,\dots,r_s,r_1')}.
    \end{equation*}
    Since $r_1,\dots,r_s$ are arbitrarily taken from $\{1,2,\dots,n\}$, we can replace $r_1,\dots,r_s$ by a cyclic shift $r_k,r_{k+1},\dots,r_{k-1}$ to obtain 
    \begin{equation*}
        0 = \sum_{\ell=1}^s F_1^{(r_\ell)}\overline{F}_{r_\ell}^{(r_1,\dots,r_s,r_\ell')} - F_1^{(r_k)}\overline{F}_{r_k}^{(r_1,\dots,r_s,r_k')},
    \end{equation*}
    for all $k=1,\dots, s$. Upon adding up all these $s$ equations, we arrive at $$(s-1)\sum_{\ell=1}^s F_1^{(r_\ell)}\overline{F}_{r_\ell}^{(r_1,\dots,r_s,r_\ell')} = 0,$$ whence $\sum_{\ell=1}^s F_1^{(r_\ell)}\overline{F}_{r_\ell}^{(r_1,\dots,r_s,r_\ell')} = 0$ and thus $F_1^{(r_k)}\overline{F}_{r_k}^{(r_1,\dots,r_s,r_k')} = 0$, for all $k=1,\dots, s$. However, we are assuming that $F_1^{(r_k)}\neq 0$, so $F_{r_k}^{(r_1,\dots,r_s,r_k')}$ is uniquely determined for all $r_1,\dots,r_s$, which implies that the inductive assumption \ref{ASS:F} holds when $s$ is replaced with $s+1$, i.e. for all $\balpha$ with $|\balpha|\le s-1$.

    It only remains to prove that the remaining inductive assumptions \ref{ASS:m}, \ref{ASS:u}, \ref{ASS:Grad u}, and \ref{ASS:Laplacian u} also hold when $s$ is replaced with $s+1$.

    \paragraph{Assumption \ref{ASS:m}.} We want to show that $m_i^{(\balpha)}$ is uniquely determined for all $i=1,2\dots,n$ and $\balpha$ with $|\balpha| = s$ and $\alpha_k \le 1$ for all $k=1,\dots, d$. We split into cases: $\alpha_1 = 1$ and $\alpha_1 = 0$.

    \subparagraph{Case 1: $\alpha_1=1$.} Let us return to equation \eqref{EQ:s order m} for $\overline{m}_i^{(1,\dots,s)}$. We now know that the right-hand side vanishes since we have just proved that assumption \ref{ASS:F} holds for all $|\balpha|\le s-1$. Therefore, $\overline{m}_i^{(1,\dots,s)} = 0$ and $m_i^{(1,\dots,s)}$ is uniquely determined. By symmetry, the same is true for $m_i^{(\balpha)}$ for all $\balpha$ with $|\balpha|\le s$, $\alpha_k \le 1$ for all $k=1,\dots, d$, and $\alpha_1=1$.

    \subparagraph{Case 2: $\alpha_1 = 0$.} Taking the $s$-th order linearization of the second equation of \eqref{EQ:MFG MultiPop simpler} with respect to $\eps_2,\dots,\eps_{s+1}$ gives 
    \begin{align*}
        \partial_t \overline{m}_i^{(2,\dots,s+1)} - \Delta \overline{m}_i^{(2,\dots,s+1)} = 0,
    \end{align*}
    thanks to assumptions \ref{ASS:m} and \ref{ASS:Grad u}. Together with the initial condition $\overline{m}_i^{(2,\dots,s+1)}(x,0) = 0$, this implies that $\overline{m}_i^{(2,\dots,s+1)} = 0$. That is, $m_i^{(2,\dots,s+1)}$ is uniquely determined. By symmetry, the same is true for $m_i^{(\balpha)}$ for all $\balpha$ with $|\balpha| = s$, $\alpha_k \le 1$ for all $k=1,\dots, d$, and $\alpha_1=0$.

    \paragraph{Assumption \ref{ASS:Laplacian u}.} Let us return to the first equation of \eqref{EQ:s order, x-independent}. Notice that we now know that the last term in the right-hand side vanishes since we have proved that assumption \ref{ASS:F} holds for all $|\balpha|\le s-1$. Furthermore, we know that the first term vanishes since we just proved that $\overline{m}_i^{(1,\dots,s)} = 0$. Lastly, the second term simplifies to just $\overline{F}_i^{(r_1,\dots,r_s)} m_{r_1}^{(1)} = \overline{F}_i^{(r_1,\dots,r_s)} \psi_{-|\xi|^2}(t)\phi_{\xi}(x)$, so the first equation of \eqref{EQ:s order m} ultimately simplifies to 
    \begin{align*}
        -\partial_t \overline{u}_i^{(1,\dots,s)}-\Delta \overline{u}_i^{(1,\dots,s)} = \overline{F}_i^{(r_1,\dots,r_s)} \psi_{-|\xi|^2}(t)\phi_{\xi}(x),
    \end{align*}
    together with the terminal condition $\overline{u}_i^{(1,\dots,s)}(x,T) = 0$. Solving this terminal value problem as in the first-order linearization step and taking the Laplacian of the solution leads to
    \begin{align*}
        \Delta \overline{u}_i^{(1,\dots,s)} = \overline{F}_i^{(r_1,\dots,r_s)}H_{\xi}^1(t)\phi_{\xi}(x).
    \end{align*}
    Thus assumption \ref{ASS:Laplacian u} holds with $s$ replaced with $s+1$, as desired.
    
    \paragraph{Assumption \ref{ASS:u}.}  We want to show that $u_i^{(\balpha)}$ is uniquely determined for all $i=1,2\dots,n$ and $\balpha$ with $|\balpha| = s-1$ and $\alpha_k \le 1$ for all $k=1,\dots, d$. We split into cases: $\alpha_1 = 1$ and $\alpha_1 = 0$. 
    
    \subparagraph{Case 1: $\alpha_1=1$. } Replacing $s$ with $s-1$ in the first equation of \eqref{EQ:s order, x-independent} gives 
    \begin{multline*}
        -\partial_t \overline{u}_i^{(1,\dots,s-1)}-\Delta \overline{u}_i^{(1,\dots,s-1)} =\sum_{k_1=1}^n F_i^{(k_1)}\overline{m}_{k_1}^{(1,\dots,s-1)}\\+\sum_{k_1=1}^n\cdots\sum_{k_{s-1}=1}^n\overline{F}_i^{(k_1,\dots,k_{s-1})}m_{k_1}^{(1)}\cdots m_{k_{s-1}}^{(s-1)} + \mathbbm{1}_{s-1 > 2}\sum_{k_1=1}^n\cdots\sum_{k_{s-2}=1}^n \overline{F}_i^{(k_1,\dots,k_{s-2})} M_{s-1}.
    \end{multline*}
    Observe that all of the quantities on the right-hand side with a bar are, in fact, $0$ due to the inductive assumptions and our previous work. Thus, $\overline{u}_i^{(1,\dots,s-1)}$ solves the backward heat equation 
    \begin{align*}
        -\partial_t \overline{u}_i^{(1,\dots,s-1)}-\Delta \overline{u}_i^{(1,\dots,s-1)} = 0,
    \end{align*}
    with zero terminal condition $\overline{u}_i^{(1,\dots,s-1)}(x,T) = 0$. Hence $\overline{u}_i^{(1,\dots,s-1)} = 0$ and $u_i^{(1,\dots,s-1)}$ is uniquely determined. By symmetry, the same is true for $u_i^{(\balpha)}$ for all $\balpha$ with $|\balpha| = s-1$, $\alpha_k \le 1$ for all $k=1,\dots, d$, and $\alpha_1=1$.

    \subparagraph{Case 2: $\alpha_1=0$.} This is similar to Case 1. Taking the $(s-1)$-th order linearization of the first equation of \eqref{EQ:MFG MultiPop simpler} with respect to $\eps_2,\dots,\eps_s$ gives 
    \begin{multline*}
        -\partial_t \overline{u}_i^{(2,\dots,s)}-\Delta \overline{u}_i^{(2,\dots,s)}
        =\sum_{k_1=1}^n F_i^{(k_1)}\overline{m}_{k_1}^{(2,\dots,s)}\\+\sum_{k_1=1}^n\cdots\sum_{k_{s-1}=1}^n\overline{F}_i^{(k_1,\dots,k_{s-1})}m_{k_1}^{(2)}\cdots m_{k_{s-1}}^{(s)} + \mathbbm{1}_{s-1 > 2}\sum_{k_1=1}^n\cdots\sum_{k_{s-2}=1}^n \overline{F}_i^{(k_1,\dots,k_{s-2})} N_{s-1}.
    \end{multline*}
    Here, $N_{s-1}$ is a sum of products of $m$-terms, each with order at most $2$ (thus uniquely determined by assumption~\ref{ASS:m}). Again, observe that all of the quantities on the right-hand side with a bar are, in fact, $0$ due to the inductive assumptions and our previous work. The rest of the proof proceeds as in Case 1.

    \paragraph{Assumption \ref{ASS:Grad u}.} We want to show that $\nabla u_i^{(\balpha)}$ is uniquely determined for all $i=1,2\dots,n$ and $\balpha$ with $|\balpha| = s$, $\alpha_k\le 1$  for all $k=1,\dots, d$, and $\alpha_1 = 0$. Taking the $s$-th order linearization of the first equation of \eqref{EQ:MFG MultiPop simpler} with respect to $\eps_2,\dots,\eps_{s+1}$ gives 
    \begin{multline*}
        -\partial_t \overline{u}_i^{(2,\dots,s+1)}-\Delta \overline{u}_i^{(2,\dots,s+1)}=\sum_{k_1=1}^n F_i^{(k_1)}\overline{m}_{k_1}^{(2,\dots,s+1)} \\
        +\sum_{k_1=1}^n\cdots\sum_{k_{s}=1}^n\overline{F}_i^{(k_1,\dots,k_{s})}m_{k_1}^{(2)}\cdots m_{k_{s}}^{(s+1)} + \mathbbm{1}_{s > 2}\sum_{k_1=1}^n\cdots\sum_{k_{s-1}=1}^n \overline{F}_i^{(k_1,\dots,k_{s-1})} N_{s}.
    \end{multline*}
    The first and last terms on the right-hand side vanish due to previous work on uniqueness. As for the second term, it simplifies to just the constant $\overline{F}_i^{(r_2,\dots,r_{s+1})}$. Together with the terminal condition $\overline{u}_i^{(2,\dots,s+1)}(x,T) = 0$, we deduce that $\overline{u}_i^{(2,\dots,s+1)}(x,t) = (T-t)\overline{F}_i^{(r_2,\dots,r_{s+1})}$. In particular, $\nabla \overline{u}_i^{(2,\dots,s+1)} = 0$. That is, $\nabla u_i^{(2,\dots,s+1)}$ is uniquely determined. By symmetry, the same is true for $\nabla u_i^{(\balpha)}$ for all $\balpha$ with $|\balpha| = s$, $\alpha_k \le 1$ for all $k=1,\dots, d$, and $\alpha_1=0$.
    
    Then by induction, the assumptions \ref{ASS:F}, \ref{ASS:m}, \ref{ASS:u}, \ref{ASS:Grad u}, \ref{ASS:Laplacian u} hold for all $s\ge 2$. In particular, assumption \ref{ASS:F} holds for all $s\ge 2$. Thus 
    \[
        \bF^1(\bz)=\bF^2(\bz)\ \text{in}\ \bbR^n,
    \]
    and the proof is complete.
\end{proof}

\begin{remark}
    It is important to observe that the condition $F_1^{(k)}\neq 0$ is necessary for the second part of the theorem to hold. This condition ensures different population components are not completely decoupled, as otherwise, we can not expect to be able to reconstruct the cost function for a population that does not interact with the population where the measurement is taken.
\end{remark}

\section{Concluding remarks}
\label{SEC:Concluding}

This work studied inverse problems to a multipopulation mean field game system. Utilizing existing tools in inverse problem theory, such as the multi-linearization technique, we derived results showing that the cost functions can be uniquely reconstructed from multipopulation or single-population data. In the latter case, mild additional assumptions have to be imposed on the form of the cost functions.

Several aspects of our study can be improved. For instance, we studied this MFG system on a torus. Even though this is widely taken when studying MFG models, it would be good to generalize the results to the model in bounded domains, in which case appropriate boundary conditions need to be included. We should also be able to weaken the condition in~\Cref{THM:state-indep recon} on $F_1^{(1)}\neq 0$, as we expect that all we need is to ensure that different populations are not completely decoupled.

One really interesting problem is to see whether or not one could hope to reconstruct the cost functions of all populations from data measured at a single population in a general setup. \Cref{THM:state-indep recon} shows that it is possible to do so in the special case of state-independent cost functions, assuming that different populations are coupled together. Is this still true when the cost functions depend on the state variable? If it is possible to do so, what type of coupling conditions do we need to impose on the coupling of different populations?

\section*{Acknowledgments}

We would like to thank Professor Michael Klibanov (University of North Carolina at Charlotte) for useful suggestions on an early version of the draft. We would like to thank the anonymous referees for their useful comments that helped us improve the quality of this work. This work is partially supported by the National Science Foundation through grants DMS-1937254 and DMS-2309802.
    
{\small

}

\end{document}